\documentclass[a4paper,11pt]{article}

\usepackage[utf8]{inputenc} 
\usepackage[T1]{fontenc}
\usepackage{graphicx}
\usepackage{multirow}
\usepackage{amsmath,amssymb,amsfonts}
\usepackage{amsthm}
\usepackage[title]{appendix}
\usepackage{xcolor}
\usepackage{textcomp}
\usepackage{manyfoot}
\usepackage{booktabs}
\usepackage{enumitem}
\usepackage{authblk}
\usepackage{hyperref} 
\theoremstyle{plain}
\newtheorem{theorem}{Theorem}
\newtheorem{proposition}[theorem]{Proposition}
\newtheorem{lemma}[theorem]{Lemma}
\newtheorem{corollary}[theorem]{Corollary}

\theoremstyle{remark}
\newtheorem{remark}{Remark}

\theoremstyle{definition}
\newtheorem{definition}{Definition}

\newcommand{\mB}{\mathcal{B}}
\newcommand{\mXlambda}{\mathcal{X}(\lambda)}

\title{On the Complexity of Linear Programs with Parametric Constraint Matrices}

\author[1]{Laurie Boveroux}
\author[2]{Margarida Carvalho}
\author[3]{Andrea Lodi}
\author[1]{Quentin Louveaux}

\affil[1]{Department of Computer Science and Electrical Engineering, University of Liège, Liège, Belgium}
\affil[2]{Department of Computer Science and Operations Research, Université de Montréal, Montréal, Québec, Canada}
\affil[3]{Cornell Tech and Technion, New York, NY, USA}

\date{}

\begin{document}

\maketitle

\date{}

\begin{abstract}
While sensitivity analysis for linear programs with parametric right-hand side vectors is well established, cases involving parametric constraint matrices remain less explored.
To address this gap, we investigate linear programs where the constraint matrix depends linearly on a scalar parameter.
Our goal is to compute the minimum and maximum optimal values of these parametric linear programs within a fixed parameter range. This corresponds to solving two bilevel programs.
We show that both decision versions of these optimization problems are NP-complete by proving both NP-hardness and membership in NP.
We additionally identify three sufficient conditions on the parametric matrix under which the bilevel programs are no longer NP-hard.
We introduce a reformulation for both problems, together with a Farkas-based certificate for the lower-level feasibility condition, and compare the execution time for solving them on standard benchmark instances.
The experiments highlight a practical dichotomy. Although both bilevel optimizations are NP-hard, computing the parameter value corresponding to the minimum of the linear program is fast, whereas computing the value corresponding to the maximum is not.
We relate this asymmetry to the parametric equality constraints, whose free dual variables leave the relaxation of the maximum reformulation unbounded.
\end{abstract}

\noindent\textbf{Keywords:}
{Parametric linear programming, Bilevel optimization, Computational complexity, NP-completeness, Sensitivity analysis}


\maketitle


\section{Introduction}
Parametric linear programming studies how the optimal value of a linear program evolves under perturbations.
While classical settings consider perturbations of the objective or the right-hand side, much less is known when the constraint matrix itself is affected.
In this work, we study linear programs parameterized by a single scalar parameter $\lambda$ varying over a given interval $\Lambda \subset \mathbb{R}$, where the parameter linearly modifies some coefficients of the constraint matrix.
For a given realization of $\lambda \in \Lambda$, we denote the corresponding linear program by $\mathcal{P}(\lambda)$ and formalize it as
\begin{equation*}
\begin{aligned}
    f(\lambda) \;=\; \min_{\mathbf{x} \in \mathbb{R}^n} \quad & \mathbf{c}^\top \mathbf{x} \\
    \text{s.t.} \quad & A \mathbf{x}+ \lambda D\mathbf{x} \le \mathbf{b}, \\
    & \mathbf{x} \ge \mathbf{0}
\end{aligned}
\tag{$\mathcal{P}(\lambda)$} \label{prob:parametric_lp}
\end{equation*}
where $\mathbf{x} \in \mathbb{R}^n$ is the decision vector, $A \in \mathbb{Q}^{m \times n}$ is the nominal constraint matrix, $D \in \mathbb{Q}^{m \times n}$ is the perturbation matrix associated with $\lambda$ and $\mathbf{b} \in \mathbb{Q}^m$ is the right-hand side vector.
The function $f(\lambda)$ is called the \emph{value function} of the parametric linear program.
For instance, this setting appears when a single physical or economic parameter influences multiple constraints simultaneously. 
In time-indexed models of production or energy systems, parameters such as efficiency or resource availability appear repeatedly across constraints at each time period.
Therefore, varying such a parameter affects multiple coefficients in the constraint matrix.

In such models, the objective typically represents total operating or investment costs, or a performance metric.
Understanding how the objective value evolves with respect to the parameter $\lambda$ is interesting to assess the impact of uncertainty on the objective value and to identify the worst-case and best-case scenario.
In particular, determining the global extrema of the associated value function over a given interval,
\[
\min_{\lambda \in [\lambda_1, \lambda_2]} f(\lambda)
\quad \text{and} \quad
\max_{\lambda \in [\lambda_1, \lambda_2]} f(\lambda),
\]
where $\lambda_1, \lambda_2 \in \mathbb{Q}$, provides exact bounds on the achievable cost or performance as the parameter varies.
These extrema lead to bilevel formulations, where $\lambda$ is selected at the upper level and the linear program is solved at the lower level.
However, the function $f(\lambda)$ may be neither convex nor concave, making these problems difficult to solve.
An illustrative example is shown in Figure~\ref{fig:nonconvex_example}, which depicts the value function of the following parametric linear program over $\lambda \in [-1, 1]$:
\begin{equation*}
\label{eq:nonconvex_example}
\begin{aligned}
f(\lambda) = \min_{x_0, x_1} \quad & -x_1 \\
\text{s.t.} \quad & (-1 + \lambda)\, x_0 + x_1 \le 1, \\
& 2\lambda\, x_0 + (1 - \lambda)\, x_1 \le 1, \\
& (1 - \lambda)\, x_0 - (1 + \lambda)\, x_1 \le 1, \\
& x_0, x_1 \ge 0.
\end{aligned}
\end{equation*}
This function has two local minima, around $\lambda \approx -0.35$ and $\lambda \approx 0.4$, separated by a local maximum at $\lambda = 0$.
Hence, $f$ is neither convex nor concave, and not even quasi-convex or quasi-concave.

\begin{figure}[ht]
\centering
\includegraphics[width=0.9\textwidth]{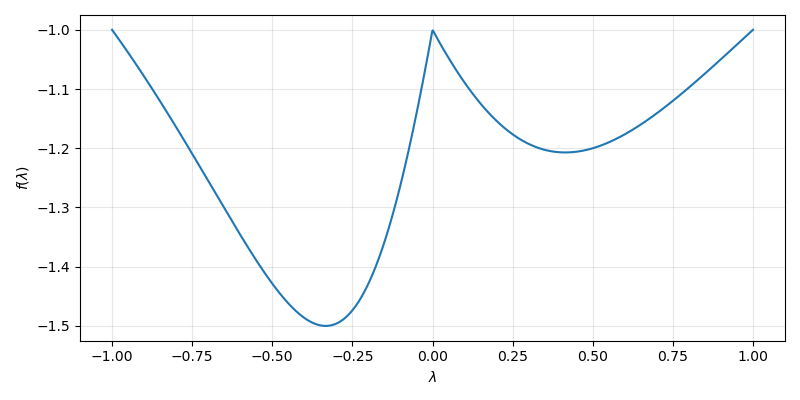}
\caption{Value function $f(\lambda)$ of the illustrative example over $\Lambda=[-1,1]$: $f$ is neither convex nor concave.}\label{fig:nonconvex_example}
\end{figure}

\subsection{Related work}
Three streams of research are related to our work: sensitivity analysis, robust optimization, and bilevel optimization.

Parametric and sensitivity analysis in linear programming study how optimal solutions and objective values evolve when problem data are perturbed.
Studies mainly focus on variations in the objective coefficients or in the right-hand side.
Perturbations on the constraint matrix have received less attention.
Among the works that explicitly consider parameter-dependent constraint matrices, Zuidwijk \cite{zuidwijk2005linear} studies linear programs whose constraint matrix depends on a scalar parameter and derives local expressions for the evolution of the optimal value.
Zuidwijk's analysis relies on the characterization of regions in which a given basis remains optimal, leading to a piecewise description of the value function.
Similarly, Khalilpour and Karimi \cite{khalilpour2014parametric} consider uncertainty on the left-hand side and propose a parametric solution approach based on basis-dependent computations.

More recently, Derval et al.~\cite{miftari2024sensitivity} propose bounding procedures to estimate the value function over a parameter interval, yielding computable upper and lower bounds.
Derval et al.~\cite{derval2025efficient} study the evaluation of optimal solutions and objective values for a finite set of parameter values and develop warm-start strategies to accelerate repeated solves.
Both works consider the same parametric model as the present paper, but focus primarily on its computational treatment.
Their contributions are complementary to ours, which studies the computational complexity of optimizing the value function over an entire parameter interval and develops exact single-level reformulations for computing its minimum and maximum.

A related line of research is robust optimization \cite{ben2009robust}, whose goal is to find a solution that optimizes the objective function under the worst-case 
realization of uncertain data while remaining feasible across all potential data realizations.
This framework is also heavily motivated by parameter uncertainty within the constraint matrix coefficients, e.g., see~\cite{soyster1973convex,ben1999robust,bertsimas2004price}. In contrast to this focus on identifying a robust decision, our objective is to characterize the best- and worst-case values reached by the optimal value function of the linear program when the scalar parameter varies over a given interval.

Our work is also connected to bilevel optimization.
Indeed, optimizing the value function with respect to the parameter yields a bilevel formulation, where the upper-level variable is the scalar parameter $\lambda$ and the lower level corresponds to a linear program.
From a complexity perspective, the decision version of bilevel linear optimization problems is NP-complete; Jeroslow~\cite{jeroslow1985polynomial} established NP-hardness and Buchheim~\cite{buchheim2023bilevel} showed inclusion in NP.
This NP-hardness persists under other restrictive settings, such as min-max variants of bilevel linear optimization problems~\cite{hansen1992new}.
Moreover, bilevel optimization problems remain NP-hard even when the upper level involves a single decision variable~\cite{sugishita2025complexity}, or when considering the pessimistic variant with a fixed number of lower-level variables~\cite{ketkov2025note}.
Despite this, our problem does not fall directly into the class of bilevel linear optimization problems.
Indeed, the dependence on $\lambda$ appears in the constraint matrix of the lower-level problem and breaks the linearity.
Thus, existing complexity results for bilevel linear optimization cannot be directly transferred to our setting.
Related complexity analyses have been conducted for bilevel problems involving bilinear terms, such as bilevel quadratic programs, toll-pricing and combinatorial pricing~\cite{ketkov2025note,roch2005approximation,bui2024asymmetry,grune2025complexity}.
In these formulations, the bilinearities arise in the objective function from interactions between upper- and lower-level decision variables, whereas in our setting they involve a single scalar parameter $\lambda$ that affects the feasible region.

\subsection{Our contributions}
In this paper, we study parametric linear programs in which the constraint matrix depends linearly on a scalar parameter. As formalized in Section~\ref{sec:statement}, we  investigate the problem of computing the parameter values that minimize and maximize the optimal value of the resulting linear program over a given interval.
In Section~\ref{sec:complexity}, we prove that both decision versions of the optimization problems are NP-complete by establishing polynomial-time verifiable certificates together with polynomial reductions from known NP-complete problems.
We then identify three sufficient conditions on the parametric matrix under which both bilevel problems become polynomially solvable.
In Section~\ref{sec:reformulations}, we derive single-level reformulations for both problems. To this end, dualization of the lower-level is standard, but here it is valid only over parameter values where the lower-level is feasible and bounded. We therefore pair the strong duality reformulation of the maximum problem with a Farkas-based certificate. 
Finally, in Section~\ref{sec:experiments}, we confront the theoretical complexity picture with practice on benchmark instances proposed by Derval et al. \cite{miftari2024sensitivity}. Although both problems are NP-hard, the minimum reformulation can be solved efficiently, whereas computing the maximum remains significantly more challenging. Section~\ref{sec:conclusions} concludes the paper.

\section{Problem statement}\label{sec:statement}
We consider the parametric linear optimization problem \ref{prob:parametric_lp}.
We are interested in computing its global extrema over the interval $\Lambda = [\lambda_1, \lambda_2]$.
This leads to the following optimization problems:
\begin{equation}
\min_{\lambda \in [\lambda_1,\lambda_2]} \;\; \min_{\mathbf x \ge 0} \; \left\{ \mathbf c^\top \mathbf x \;\middle|\; A\mathbf x + \lambda D \mathbf x \leq \mathbf b \right\} \label{min--min}
\end{equation}
and
\begin{equation}
\max_{\lambda \in [\lambda_1,\lambda_2]} \;\; \min_{\mathbf x \ge 0} \; \left\{ \mathbf c^\top \mathbf x \;\middle|\; A\mathbf x + \lambda D \mathbf x \leq \mathbf b \right\}. \label{max--min}
\end{equation}
These formulations can be interpreted as bilevel optimization problems in which the upper level selects the parameter $\lambda$, while the lower level solves the linear program \ref{prob:parametric_lp}.

\begin{remark}
    \label{remark:bilevel_inf_unb}
    We note that for $\lambda \in [\lambda_1,\lambda_2]$ such that the lower-level is unbounded, our min--min problem~\eqref{min--min} is unbounded. Moreover, for $\lambda \in [\lambda_1,\lambda_2]$ such that the lower-level is infeasible, our max--min problem~\eqref{max--min} is unbounded. However, if the lower-level was formalized instead as the following constraint within our bilevel programs:
    $$\mathbf{x} \in \arg \min_{\mathbf x' \ge 0} \; \left\{ \mathbf c^\top \mathbf x' \;\middle|\; A\mathbf x' + \lambda D \mathbf x' \leq \mathbf b \right\},$$
    a $\lambda \in [\lambda_1,\lambda_2]$ such that (i) the lower-level is unbounded would result in min--min problem~\eqref{min--min} being infeasible, and (ii) the lower-level is infeasible would result in max--min problem~\eqref{max--min} being infeasible. We adopt the formulations~\eqref{min--min} and~\eqref{max--min} because our goal is to bound the value function over $\Lambda$: they return $f(\lambda) = -\infty$ and $f(\lambda) = +\infty$ as the correct extrema, whereas the argmin-constrained formulation would instead declare the bilevel problem infeasible. This detail is relevant for the NP membership and algorithmic approaches considered in this paper.
\end{remark}

\subsection{Auxiliary reformulations}
\label{subsec:aux}

We first observe that the min--min problem~\eqref{min--min} can be rewritten as a single-level problem.
Since the upper and lower levels have the same objective function, the bilevel formulation reduces to a joint minimization over $(\mathbf x,\lambda)$:
\begin{equation*}
\min_{\mathbf{x}, \, \lambda} \quad \mathbf{c}^\top \mathbf{x} \quad \text{s.t.} \quad (A + \lambda D)\mathbf{x} \le \mathbf{b}, \quad \mathbf{x} \ge \mathbf{0}, \quad \lambda_1 \le \lambda \le \lambda_2 \tag{$\mathcal{P}_{\min}$}\label{prob:p_min}
\end{equation*}
In contrast, the max--min problem cannot be reduced to a joint optimization problem,
since the lower-level variable must be optimal for the value of $\lambda$ chosen at the upper level.
To handle this structure, we rely on a dual reformulation of the lower-level problem. Its dual is
\[
\max_{\mathbf y \le \mathbf 0} \left\{
\mathbf b^\top \mathbf y \;\middle|\;
(A+\lambda D)^\top \mathbf y \le \mathbf c
\right\}.
\]
If we assume that the linear program $\mathcal P(\lambda)$ is feasible for all $\lambda$ and has a finite optimal value, by strong duality, we therefore have
\[
\min_{\mathbf x \ge \mathbf 0} \left\{
\mathbf c^\top \mathbf x \;\middle|\;
(A+\lambda D)\mathbf x \le \mathbf b
\right\}
=
\max_{\mathbf y \le \mathbf 0} \left\{
\mathbf b^\top \mathbf y \;\middle|\;
(A+\lambda D)^\top \mathbf y \le \mathbf c
\right\}.
\]
These two auxiliary reformulations of the lower-level (i.e., primal and dual representations) will be used throughout the paper.
The joint reformulation of the min--min problem is used in the complexity analysis and in the computational experiments.
The dual reformulation of the lower-level problem is used in the complexity analysis of the max--min problem and in the derivation of its single-level formulation.

\section{Complexity}\label{sec:complexity}
Given a rational threshold $K$, the decision versions of min--min~\eqref{min--min} and max--min~\eqref{max--min} are, respectively
\begin{equation}
   \exists \lambda\in\Lambda \text{ such that } f(\lambda)\le K,
    \tag{D-min}
     \label{D-min}
\end{equation}
and
\begin{equation}
    \exists \lambda\in\Lambda \text{ such that } f(\lambda)\ge K.
    \tag{D-max}
    \label{D-max}
\end{equation}

In this section, we show the following main theorem:
\begin{theorem}\label{theorem:main}
The decision problems~\ref{D-min} and~\ref{D-max} 
are NP-complete.
\end{theorem}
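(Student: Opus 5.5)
The proof has two halves for each of \ref{D-min} and \ref{D-max}: membership in NP via a polynomial-size certificate, and NP-hardness via a reduction from a known NP-complete problem.

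\textbf{Membership in NP.} For \ref{D-min}, the certificate is a pair $(\lambda,\mathbf x)$. We may take it to be a vertex of the polyhedron obtained by fixing the set of tight constraints. More concretely, the value function $f$ is piecewise rational. Each piece is determined by a basis $B$: on that piece the solution is $\mathbf x_B=(A+\lambda D)_B^{-1}\mathbf b$, and its entries are ratios of polynomials in $\lambda$ of degree at most $m$.

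This suggests the cleaner certificate: a basis $B$ together with a rational $\lambda$. The verifier solves the LP $\mathcal P(\lambda)$ at that fixed $\lambda$ in polynomial time and checks $f(\lambda)\le K$ (or $\ge K$). We also allow certificates of infeasibility or unboundedness, in line with Remark~\ref{remark:bilevel_inf_unb}, since these give $f(\lambda)=+\infty$ or $-\infty$.

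The real issue is that the relevant $\lambda$ might be irrational. An extremum of a rational piece, or a breakpoint where a basis changes, is a root of a univariate polynomial. I would handle this as follows.
\begin{itemize}[leftmargin=*]
\item For \ref{D-max}, on a basis-stable interval $f$ is a ratio of polynomials. Since the condition $f\ge K$ is closed, it suffices to certify a rational point where the condition holds. If the supremum is attained only at an irrational point, I would argue that a nearby rational point with polynomial encoding size suffices. The tool is root-separation bounds for integer polynomials, which give polynomial bit-length bounds on the interval endpoints.
\item For \ref{D-min}, the set $\{(\lambda,\mathbf x): (A+\lambda D)\mathbf x\le \mathbf b,\ \mathbf c^\top\mathbf x\le K\}$ is semialgebraic. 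Fixing which constraints are tight and which variables are zero reduces feasibility to a system whose solution set in $\lambda$ is a union of intervals with algebraic endpoints. Again, either a rational point lies in the interior of such an interval, or an endpoint is itself a feasible point; in the latter case I would use the interval endpoints $\lambda_1,\lambda_2$ or argue via closedness.
\end{itemize}
This bit-size argument is the step I expect to be the main obstacle. It is also precisely where the convention of Remark~\ref{remark:bilevel_inf_unb} matters.

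\textbf{NP-hardness.} I would reduce from a problem known to be hard for quadratic or bilinear feasibility with one shared parameter; my first choice is Subset Sum or Partition. The idea is to use $\lambda$ times variables to emulate products.

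A single scalar $\lambda$ cannot encode $n$ binary choices directly. Instead, I would build the lower level as a disjoint sum of $n$ gadgets $\mathcal P_i(\lambda)$, so that $f(\lambda)=\sum_i g_i(\lambda)$. The hope is to choose $g_i$ with exponentially spaced oscillation, for example $g_i$ depending on the $i$-th bit of $\lambda$ through sawtooth-like piecewise rational behaviour. Building such gadgets with a polynomial-size LP is delicate.

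A more promising route is to reduce from a known NP-hard bilevel problem with a single upper-level variable \cite{sugishita2025complexity}, or from linear bilevel programming \cite{jeroslow1985polynomial}. The idea is to introduce $y=\lambda x$-type substitutions: the constraint $x_j-\lambda w_j\le 0$ together with $w_j$ fixed to $1$ makes $\lambda$ appear as a right-hand side. This shows that any bilevel LP whose single upper variable enters only the lower-level right-hand side is a special case of our model. Hardness of \ref{D-max} then follows from the cited result, provided that result has this form.

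For \ref{D-min}, hardness must come from nonconvexity of the joint problem \ref{prob:p_min}. I would encode bilinear constraints $\lambda x_j$ so that feasibility of $f(\lambda)\le K$ requires $\lambda$ to lie at a point consistent with a hard combinatorial condition. The approach is to reduce from a single-parameter problem such as deciding whether some $\lambda$ in an interval satisfies a system of piecewise-linear conditions of the form $\sum_i |a_i\lambda - b_i|\le K$-type with a nonconvex twist, realised through the ratio structure $x=1/(1+\lambda d)$.
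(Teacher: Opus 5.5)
\textbf{Membership in NP.} Your certificate is a rational $\lambda$; the basis is then redundant, since the verifier simply solves $\mathcal{P}(\lambda)$. You plan to handle irrational optimizers through closedness and root-separation bounds. That step fails, because the set of witnesses can be a single irrational point.

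Take $\Lambda=[1,2]$, variables $x_1,\dots,x_5\ge 0$, and the equalities $x_1=1$ and $x_{k+1}-\lambda x_k=0$ ($k=1,\dots,4$), each written as two inequalities. The only feasible point is $(1,\lambda,\lambda^2,\lambda^3,\lambda^4)$. The objective $(K-4)x_1+4x_3-x_5$ then gives $f(\lambda)=K-(\lambda^2-2)^2$. This is a YES-instance of \ref{D-max} whose only witness is $\lambda=\sqrt2$. The objective $(K+4)x_1-4x_3+x_5$ gives the same behaviour for \ref{D-min}. No rational $\lambda$, however close to $\sqrt2$, certifies either instance, and closedness gives you nothing here.

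The missing idea is to keep $\lambda$ out of the certificate altogether. Guess only combinatorial data: a primal basis for \ref{D-min} or a dual basis for \ref{D-max}, plus an index for an unbounded ray or a Farkas ray. By Cramer's rule, the basic solution and the ray are rational functions with common denominator $q_{\mathcal{B}}(\lambda)=\det M_{\mathcal{B}}(\lambda)$, of degree at most the basis size and with polynomial-size coefficients. Clear denominators with $q_{\mathcal{B}}(\lambda)^2$ and add $q_{\mathcal{B}}(\lambda)^2>0$. The verifier then decides whether this univariate polynomial sign system has a solution in $[\lambda_1,\lambda_2]$, which takes polynomial time by Renegar's theorem or Sturm sequences. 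This is the paper's argument. Encoding $\lambda$ as an algebraic number with an isolating interval would also work, but verification would then need exact sign determination, not ``solve the LP at $\lambda$''.

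\textbf{NP-hardness.} None of your hardness routes is carried through, and the most concrete one cannot work. Suppose $\lambda$ enters only through the right-hand side via $x_j-\lambda w_j\le 0$ with $w_j=1$. Then $f(\lambda)=\min\{\mathbf c^\top\mathbf x : A\mathbf x\le\mathbf b+\lambda\mathbf d,\ \mathbf x\ge\mathbf 0\}$ is convex in $\lambda$, because the value of a minimization LP is convex in its right-hand side. With a single $w$ this is also the paper's tractable one-nonzero-column case. So the maximum over $[\lambda_1,\lambda_2]$ is attained at an endpoint, and min--min is an LP in $(\mathbf x,\lambda)$. Every instance produced by that reduction is therefore polynomially solvable.

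The results of Jeroslow and of Sugishita et al.\ rely on a leader objective that differs from the follower's, which max--min cannot express. Your proviso ``provided that result has this form'' is exactly what fails. Your Subset-Sum gadgets and the ``nonconvex twist'' are left unspecified. Moreover, an explicit polynomial-size system of conditions on the single scalar $\lambda$ is decidable in polynomial time, so the hardness must be carried by the $\mathbf x$-space.

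The paper uses two ingredients you are missing.
\begin{enumerate}
\item Matsui's NP-hard linear multiplicative program $\min z_1z_2$ over a polytope. It is embedded in $(\mathcal{P}_{\min})$ by letting $\lambda$ play $z_1$ through $\lambda s=v^0-p+2p^{4n}+2p^{2n}u^0$ with $s=1$, adding the epigraph constraint $t\ge\lambda z_2$, and minimizing $t$. The bounds $\lambda_1=p^{4n}$ and $\lambda_2=5p^{4n}$ have polynomial encoding length.
\item For max--min, LP duality applied to that instance. Setting $A:=-A_M^\top$, $D:=-D_M^\top$, $\mathbf b:=\mathbf c_M\ge\mathbf 0$ and $\mathbf c:=\mathbf b_M$ makes the dual of the inner problem coincide with the min--min inner problem, so the max--min value is $-\operatorname{opt}(P_1(M))$. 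Feasibility of $\mathbf x=\mathbf 0$ keeps this identity valid, with both sides equal to $-\infty$, at values of $\lambda$ where the min--min inner problem is infeasible.
\end{enumerate}
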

In Sections~\ref{nphardmin} and~\ref{nphardmax}, we prove NP-hardness by showing that min--min~\eqref{min--min} and max--min~\eqref{max--min} contain as a special case the linear multiplicative programming problem. In Section~\ref{npminmax}, we show membership of~\ref{D-min} and~\ref{D-max} in NP.
We then, in Section~\ref{sec:tractable}, identify restricted assumptions on $D$ under which the problems are polynomially solvable.

\subsection{NP-hardness of the min--min problem} \label{nphardmin}
As shown in Section~\ref{subsec:aux}, the bilevel formulation~\eqref{min--min} is equivalent to the reformulation~\eqref{prob:p_min}.
Therefore, it is sufficient to prove NP-hardness of this single-level formulation.

\begin{theorem}
The min--min problem~\eqref{prob:p_min} is NP-hard.
\label{thmmin}
\end{theorem}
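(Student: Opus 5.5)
We reduce from linear multiplicative programming (LMP), as announced at the start of this section. Its decision version, which is NP-hard in the form we use, reads: given rational data $G,\mathbf h$ describing a polytope $P=\{\mathbf z : G\mathbf z\le \mathbf h\}$, vectors $\mathbf p,\mathbf q$, scalars $p_0,q_0$ and a threshold, decide whether
\[
\min_{\mathbf z\in P}\,(\mathbf p^\top \mathbf z+p_0)(\mathbf q^\top \mathbf z+q_0)
\]
is at most the threshold. We may assume that both affine factors are positive on $P$ and that $P$ is bounded, since the known hardness results hold under these assumptions. The idea is to let the scalar $\lambda$ play the role of one factor and let the bilinear term $\lambda D\mathbf x$ create the product. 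Because \eqref{prob:p_min} minimizes jointly over $(\mathbf x,\lambda)$, it is a nonconvex bilinear program, which is the natural target.

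\textbf{Construction.} First compute bounds $0<\ell\le \mathbf q^\top\mathbf z+q_0\le u$ over $P$ by two linear programs, in polynomial time, and set $[\lambda_1,\lambda_2]=[\ell,u]$. Introduce variables $\mathbf x=(\mathbf z,t)$. The goal is for $\lambda$ to stand for $\mathbf q^\top\mathbf z+q_0$ and for $t$ to satisfy $t=\lambda(\mathbf p^\top \mathbf z+p_0)$, so that we minimize $c^\top\mathbf x=t$. The product $\lambda\,\mathbf p^\top\mathbf z$ is bilinear in $(\lambda,\mathbf x)$, so the inequality $\lambda(\mathbf p^\top\mathbf z)+\lambda p_0 - t\le 0$ fits the form $(A+\lambda D)\mathbf x\le \mathbf b$. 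The term $\lambda p_0$ is a problem because the right-hand side $\mathbf b$ is fixed. We remove it with an auxiliary variable $s$ fixed to $1$ by the constraints $s\le 1$ and $-s\le -1$, which turns $\lambda p_0$ into $\lambda p_0 s$, a valid entry of $D$. The coupling $\lambda=\mathbf q^\top\mathbf z+q_0$ must then be written as $\lambda s-\mathbf q^\top\mathbf z - q_0 s=0$, again as two inequalities with coefficient $1$ on $s$ in $D$. Finally, nonnegativity $\mathbf x\ge 0$ is handled by a standard shift or by splitting free variables $\mathbf z=\mathbf z^+-\mathbf z^-$. Splitting is harmless here because $P$ is bounded and the remaining constraints are linear in $\mathbf z$.

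\textbf{Correctness.} Every feasible $(\mathbf x,\lambda)$ of the constructed instance gives $\mathbf z\in P$ with $\lambda=\mathbf q^\top\mathbf z+q_0$ and $t\ge \lambda(\mathbf p^\top\mathbf z+p_0)$. Conversely, every $\mathbf z\in P$ gives a feasible point whose objective equals the product; the choice $\lambda\in[\ell,u]$ is automatic. The optimal values therefore coincide, and the threshold $K$ transfers unchanged. The sizes of all data, including $\ell$ and $u$, are polynomial in the input. Hence \ref{D-min} is at least as hard as LMP, and \eqref{prob:p_min} is NP-hard.

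\textbf{Main obstacle.} The delicate step is pinning down a precise NP-hard variant of LMP whose structure fits this template. The template needs one factor that can be encoded through the single scalar $\lambda$ and its interval, positivity or sign conditions on the factors, and hardness that holds for a fixed threshold. I would first try Matsui's result that minimizing a product of two positive affine functions over a polytope is NP-hard. I would then check carefully that the constant terms and the equality coupling can be expressed using only $\lambda$ times fixed matrix entries, via the constant variable $s$, without ever placing $\lambda$ in $\mathbf b$.
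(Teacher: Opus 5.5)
Your proposal is correct and is essentially the paper's proof. The paper also reduces from Matsui's linear multiplicative program, fixes an auxiliary variable $s=1$ so that $\lambda s$ encodes one factor $z_1$ via two inequalities, and uses the epigraph row $-t+\lambda z_2\le 0$ with objective $t$. The only differences are cosmetic: the paper instantiates Matsui's explicit family $P_1(M)$, whose variables are already nonnegative and whose interval $[p^{4n},5p^{4n}]$ is bounded by hand, whereas you treat a general positive-factor instance, obtain $[\lambda_1,\lambda_2]$ from two LPs, and split free variables (valid because all constraints and the objective depend only on $\mathbf z^+-\mathbf z^-$, not because $P$ is bounded).
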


\begin{proof}
We show that every member of Matsui's NP-hard family of linear multiplicative programs~\cite{matsui1996np} can be rewritten in polynomial time as an instance of~\eqref{prob:p_min} with the same optimal value. This family of linear multiplicative programs is therefore a special case of~\eqref{prob:p_min}, from which NP-hardness follows. Matsui's family is
\begin{equation*}
\min \quad z_1 z_2 \tag{$P_1(M)$}\label{prob:p1_m}
\end{equation*}
with decision variables
\[
\mathbf u =(u_1,\dots,u_n)\in\mathbb{R}^n,\qquad
\mathbf v=(v_{ij})_{1\le i,j\le n}\in\mathbb{R}^{n\times n},
\qquad u^0,v^0,z_1,z_2\in\mathbb{R},
\]
subject to
\begin{align}
0\le u_i\le 1 &\qquad (i=1,\dots,n), \label{eq:u-bounds}\\
0\le v_{ij}\le 1 &\qquad (i,j=1,\dots,n), \label{eq:v-bounds}\\
v_{ij}\le u_i,\qquad v_{ij}\le u_j,\qquad v_{ij}\ge u_i+u_j-1
&\qquad (i\neq j), \label{eq:mc}\\
v_{ii}=u_i &\qquad (i=1,\dots,n), \label{eq:diag-rel}\\
M\mathbf u=\mathbf 1,& \label{eq:Matsui-partition}\\
u^0=\sum_{i=1}^n p^i u_i,& \label{eq:u0-def}\\
v^0=\sum_{i=1}^n\sum_{j=1}^n p^{i+j} v_{ij},& \label{eq:v0-def}\\
z_1=v^0-p+2p^{4n}+2p^{2n}u^0,& \label{eq:z1-def}\\
z_2=v^0-p+2p^{4n}-2p^{2n}u^0,& \label{eq:z2-def}
\end{align}
where $M$ is an $m\times n$ binary matrix with $n>m$ and $p=n^{n^4}$.

\noindent
We now write~\eqref{prob:p1_m} as an instance of~\eqref{prob:p_min} with decision vector
\[
\mathbf x=(\,s,\ u_1,\dots,u_n,\ v_{11},\dots,v_{nn},\ u^0,\ v^0,\ z_2,\ t\,),
\]
and parameter $\lambda$ representing $z_1$. The objective is $\mathbf c^\top \mathbf x=t$.

\noindent
All linear constraints \eqref{eq:u-bounds}–\eqref{eq:v0-def} and \eqref{eq:z2-def} are included in $A \mathbf x\le \mathbf b$, together with $s=1$ enforced by $s\le 1$, $-s\le -1$. The auxiliary variable $s$ is needed because $\lambda$ enters~\eqref{prob:p_min} only through the products $\lambda D\mathbf x$; the product $\lambda s$ therefore plays the role of $\lambda$ itself.

\noindent
The relation \eqref{eq:z1-def} is encoded as
\[
\lambda s=v^0-p+2p^{4n}+2p^{2n}u^0,
\]
i.e.,
\begin{align*}
\lambda s-v^0-2p^{2n}u^0 &\le -p+2p^{4n},\\
-\lambda s+v^0+2p^{2n}u^0 &\le p-2p^{4n}.
\end{align*}
The epigraph constraint
\[
-t+\lambda z_2\le 0
\]
enforces $t\ge \lambda z_2$.

\noindent
From the bounds on $u$ and $v$, we obtain
\[
-p+2p^{4n}\le \lambda \le n^2p^{2n}-p+2p^{4n}+2np^{3n}.
\]
For $n\ge 5$, this yields $\lambda\in[p^{4n},5p^{4n}]$ and we set $\lambda_1=p^{4n}$, $\lambda_2=5p^{4n}$.
These bounds preserve all feasible values arising from $(P_1(M))$.
Moreover, $\lambda_1$ and $\lambda_2$ are polynomially encodable, since $\log p = O(n^4 \log n)$ \cite{matsui1996np}.

\noindent
Given any feasible solution of $(P_1(M))$, define $\mathbf x=(1,\mathbf u,\mathbf v,u^0,v^0,z_2,z_1z_2)$ and $\lambda=z_1$. Then, $(\mathbf x,\lambda)$ is feasible for $(\mathcal P_{\min})$ and $\mathbf c^\top \mathbf x=z_1z_2$, hence
\[
\operatorname{opt}(\mathcal P_{\min})\le \operatorname{opt}(P_1(M)).
\]

\noindent
Conversely, let $(\mathbf x,\lambda)$ be any feasible solution for \eqref{prob:p_min}. Then, $s=1$ and the constraints imply
\[
\lambda=v^0-p+2p^{4n}+2p^{2n}u^0.
\]
Setting $z_1=\lambda$, we obtain a feasible solution of $(P_1(M))$ whose objective value satisfies $z_1z_2=\lambda z_2\le t=\mathbf c^\top\mathbf x$, by the epigraph constraint.
Since this holds for every feasible solution of~\eqref{prob:p_min},
\[
\operatorname{opt}(P_1(M))\le \operatorname{opt}(\mathcal P_{\min}).
\]

\noindent
Thus, $\operatorname{opt}(\mathcal P_{\min})=\operatorname{opt}(P_1(M))$ and NP-hardness follows.
\end{proof}

\subsection{NP-hardness of the max--min problem} \label{nphardmax}
In contrast with the min--min formulation, which reduces to a single-level optimization problem, a similar reduction is not possible at first glance 
for the max--min formulation, which remains bilevel. Thus, we expect the max--min problem, which involves a ``true'' bilevel structure, to be at least NP-hard.
The following theorem establishes this result.

\begin{theorem}\label{thmmax}
The max--min program~\eqref{max--min} is NP-hard.
\end{theorem}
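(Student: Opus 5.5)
The plan is to reduce Matsui's family $P_1(M)$ to \eqref{max--min} through the dual representation of the lower level from Section~\ref{subsec:aux}, reusing the encoding from the proof of Theorem~\ref{thmmin}. Whenever strong duality applies at $\lambda$,
\[
f(\lambda)=\max_{\mathbf y\le \mathbf 0}\left\{\mathbf b^\top\mathbf y \;\middle|\; (A+\lambda D)^\top\mathbf y\le \mathbf c\right\}.
\]
The outer maximization over $\lambda$ then merges with this inner maximization into a joint maximization over $(\lambda,\mathbf y)$. Its only nonlinearities are products of $\lambda$ with linear forms in $\mathbf y$, exactly the structure exploited for \eqref{prob:p_min}. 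So I would design the lower-level dual so that it \emph{is} the constraint system of Theorem~\ref{thmmin}, up to a sign change, and with objective $-t$.

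\textbf{Construction.} Take the vector $\mathbf w=(s,\mathbf u,\mathbf v,u^0,v^0,z_2,t)$ and the constraint system $G(\lambda)\mathbf w\le \mathbf g$ from the proof of Theorem~\ref{thmmin}. These are the linear constraints of $P_1(M)$, together with $s=1$, $\lambda s=v^0-p+2p^{4n}+2p^{2n}u^0$ and $\lambda z_2\le t$. I would first check that all components of $\mathbf w$ may be taken nonnegative at feasible points. This holds for $\mathbf u,\mathbf v,u^0,v^0$ by the bounds, and for $z_2$ because $z_2\ge -p+2p^{4n}-2np^{3n}>0$. It also holds for $t$ since $t\ge\lambda z_2>0$ on $\Lambda=[p^{4n},5p^{4n}]$; adding $\mathbf w\ge \mathbf 0$ is therefore harmless.

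Now substitute $\mathbf y=-\mathbf w\le\mathbf 0$. Choose $A,D$ so that $(A+\lambda D)^\top=-G(\lambda)$, which is possible because $G$ is affine in $\lambda$. Set $\mathbf c=\mathbf g$, and let $\mathbf b$ be the unit vector on the $t$-coordinate, so that $\mathbf b^\top\mathbf y=-t$. The resulting lower-level primal is $\min\{\mathbf g^\top\mathbf x \mid (A+\lambda D)\mathbf x\le \mathbf b,\ \mathbf x\ge \mathbf 0\}$, and the whole construction is polynomial since $\log p=O(n^4\log n)$.

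\textbf{Handling infeasibility and unboundedness.} This is the main obstacle, because strong duality is not available for every $\lambda\in\Lambda$. The set of $\lambda$ for which the Matsui-type system is feasible need not be all of $\Lambda$, and by Remark~\ref{remark:bilevel_inf_unb} a $\lambda$ with infeasible lower level would give $f(\lambda)=+\infty$ and destroy the reduction. The choice $\mathbf b\ge \mathbf 0$ resolves this, since then $\mathbf x=\mathbf 0$ is primal feasible for every $\lambda$, so $f(\lambda)<+\infty$ always. Two cases remain. If the dual is infeasible at $\lambda$, the primal is unbounded and $f(\lambda)=-\infty$, which does not affect a maximum. If the dual is feasible, it is bounded, because $-t\le-\lambda z_2<0$. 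Strong duality then gives
\[
f(\lambda)=-\min\{\lambda z_2 \mid (\mathbf u,\mathbf v,u^0,v^0,z_2)\ \text{feasible for } P_1(M) \text{ with } z_1=\lambda\},
\]
where the epigraph variable $t$ is tight at the optimum.

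\textbf{Conclusion.} Every feasible point of $P_1(M)$ has $z_1\in\Lambda$, by the bound computation in Theorem~\ref{thmmin}. Hence
\[
\max_{\lambda\in\Lambda} f(\lambda)=-\operatorname{opt}(P_1(M)),
\]
and NP-hardness of \eqref{max--min} follows from Matsui's result. The remaining details are routine: the two inequalities encoding the equality $s=1$ each become a separate dual variable, and one must verify that $\mathbf w\ge\mathbf 0$ is implied so that the sign restriction $\mathbf y\le\mathbf 0$ loses nothing.
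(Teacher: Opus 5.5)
Your proposal is correct and takes essentially the same approach as the paper. The paper also sets $A := -A_M^\top$, $D := -D_M^\top$, $\mathbf b := \mathbf c_M$, $\mathbf c := \mathbf b_M$, so that the lower-level dual is exactly the min--min instance of Theorem~\ref{thmmin}. It likewise uses $\mathbf b \ge \mathbf 0$ (so $\mathbf x = \mathbf 0$ is always feasible) to turn dual infeasibility into $f(\lambda) = -\infty$, and applies strong duality otherwise to conclude $\max_{\lambda} f(\lambda) = -\operatorname{opt}(P_1(M))$. Your explicit check that $z_2>0$ and $t>0$ at feasible points, so that the sign restriction $\mathbf y\le\mathbf 0$ loses nothing, is a detail the paper leaves implicit, and it is a useful addition.
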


\begin{proof}
We show that the specific min--min instance built in the proof of Theorem~\ref{thmmin} can in turn be rewritten in polynomial time as an instance of~\eqref{max--min} whose optimal value is its negative.
Composing the two constructions, every linear multiplicative program~\eqref{prob:p1_m} is rewritten as an instance of~\eqref{max--min}, from which NP-hardness follows.

\noindent
Consider the min--min instance built in the proof of Theorem~\ref{thmmin}, written as
\begin{equation*}
\min_{\lambda \in [\lambda_1,\lambda_2]} \;
\min_{\boldsymbol{\mu} \ge \mathbf 0} \left\{ \mathbf c_M^\top \boldsymbol{\mu} \;\middle|\; A_M \boldsymbol{\mu} + \lambda D_M \boldsymbol{\mu} \le \mathbf b_M \right\}.
\end{equation*}
Since the feasible region of~\eqref{prob:p1_m} is nonempty and bounded~\cite[Corollary~2.3]{matsui1996np}, this instance is feasible and its optimal value is finite. Note also that $\mathbf c_M \ge \mathbf 0$, since the objective of that instance is the single variable $t$.

\noindent
Consider now the max--min problem
\[
\max_{\lambda \in [\lambda_1,\lambda_2]} \;
\min_{\mathbf x \ge \mathbf 0} \left\{ \mathbf c^\top \mathbf x \;\middle|\; (A + \lambda D) \mathbf x \le \mathbf b \right\}.
\]
For a fixed value of $\lambda$, the inner minimization problem is a linear program.
Using the dual reformulation introduced in Section~\ref{subsec:aux} and the change of variables $\boldsymbol\mu = -\mathbf y \ge \mathbf 0$, its dual can be expressed as
\begin{equation*}
-\min_{\boldsymbol{\mu} \ge \mathbf 0} \left\{ \mathbf b^\top \boldsymbol{\mu} \;\middle|\; -A^\top \boldsymbol{\mu} + \lambda (-D)^\top \boldsymbol{\mu} \le \mathbf c \right\}.
\end{equation*}
We now choose the data $(A,D,\mathbf b,\mathbf c)$ so that this dual problem coincides with the min--min instance above. Setting
\begin{equation*}
A := -A_M^\top, \qquad D := -D_M^\top, \qquad \mathbf b := \mathbf c_M, \qquad \mathbf c := \mathbf b_M,
\end{equation*}
and keeping the same interval $[\lambda_1,\lambda_2]$, the dual of the inner problem becomes
\[
-\min_{\boldsymbol{\mu} \ge \mathbf 0} \left\{ \mathbf c_M^\top \boldsymbol{\mu} \;\middle|\; A_M \boldsymbol{\mu} + \lambda D_M \boldsymbol{\mu} \le \mathbf b_M \right\}. \qquad \textrm{($\boldsymbol\mu$-problem)}
\]

\noindent
Fix $\lambda \in [\lambda_1,\lambda_2]$ and consider the $\boldsymbol\mu$-problem obtained by fixing $\lambda$ in the min--min instance, whose feasible set is $\{\boldsymbol\mu \ge \mathbf 0 \mid A_M \boldsymbol\mu + \lambda D_M \boldsymbol\mu \le \mathbf b_M\}$. Whenever this set is nonempty, the $\boldsymbol\mu$-problem is bounded: its objective is the variable $t$, the epigraph constraint gives $t \ge \lambda z_2$, and $\lambda$ and $z_2$ are bounded, the latter because the construction of Theorem~\ref{thmmin} preserves the feasible points of~\eqref{prob:p1_m}, whose feasible region is bounded~\cite[Corollary~2.3]{matsui1996np}. Two cases arise. If the feasible set is nonempty, the $\boldsymbol\mu$-problem has a finite optimal value; the same then holds for its dual, which is the inner problem of the max--min instance, and strong duality yields
\[
\min_{\mathbf x \ge \mathbf 0} \left\{ \mathbf c^\top \mathbf x \;\middle|\; (A + \lambda D) \mathbf x \le \mathbf b \right\}
=
-\min_{\boldsymbol{\mu} \ge \mathbf 0} \left\{ \mathbf c_M^\top \boldsymbol{\mu} \;\middle|\; A_M \boldsymbol{\mu} + \lambda D_M \boldsymbol{\mu} \le \mathbf b_M \right\}.
\]
If instead the feasible set is empty, the $\boldsymbol\mu$-problem is infeasible and the right-hand side above equals $-\infty$, under the convention that a minimization over an empty feasible set equals $+\infty$. The left-hand side equals $-\infty$ as well: the point $\mathbf x = \mathbf 0$ is feasible for the inner problem since $\mathbf b = \mathbf c_M \ge \mathbf 0$, and an infeasible dual then forces the inner problem to be unbounded. The identity above therefore holds for every $\lambda \in [\lambda_1,\lambda_2]$.

\noindent
By substituting this expression into the max--min formulation, we have 
\begin{align*}
&\max_{\lambda \in [\lambda_1, \lambda_2]} \min_{\mathbf x \ge \mathbf 0} 
\left\{ \mathbf c^\top \mathbf x \;\middle|\; (A+\lambda D) \mathbf x \le \mathbf b \right\} \\
&\qquad = \max_{\lambda \in [\lambda_1, \lambda_2]} 
\left( -\min_{\boldsymbol{\mu} \ge \mathbf 0} 
\left\{ \mathbf c_M^\top \boldsymbol{\mu} \;\middle|\; A_M \boldsymbol{\mu} + \lambda D_M \boldsymbol{\mu} \le \mathbf b_M \right\} \right) \\
&\qquad = -\min_{\lambda \in [\lambda_1, \lambda_2]} \; \min_{\boldsymbol{\mu} \ge \mathbf 0} 
\left\{ \mathbf c_M^\top \boldsymbol{\mu} \;\middle|\; A_M \boldsymbol{\mu} + \lambda D_M \boldsymbol{\mu} \le \mathbf b_M \right\}.
\end{align*}
The constructed max--min instance therefore has optimal value $-\operatorname{opt}(P_1(M))$ by Theorem~\ref{thmmin}, and computing it is NP-hard.
\end{proof}

\subsection{Membership in NP} \label{npminmax}
We now show that the decision versions of Problems~\eqref{min--min} and \eqref{max--min} introduced at the beginning of this section belong to NP. 

\begin{theorem}
The decision problems \ref{D-min} and \ref{D-max} belong to NP.
\label{npinclusion}
\end{theorem}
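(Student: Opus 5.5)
The plan is to exhibit, for each decision problem, a certificate of polynomial size that can be checked in polynomial time. The natural certificate is a rational value $\lambda^\ast \in \Lambda$, together with a basis or dual information for $\mathcal P(\lambda^\ast)$. Membership of $\lambda^\ast$ in $[\lambda_1,\lambda_2]$ is trivial to check. Given a rational $\lambda^\ast$, solving $\mathcal P(\lambda^\ast)$ is a linear program with rational data of polynomial size, so $f(\lambda^\ast)$ (including the values $\pm\infty$) can be computed in polynomial time and compared with $K$. The whole difficulty is therefore to show that \emph{if} some $\lambda \in \Lambda$ satisfies the condition, then some $\lambda^\ast$ with polynomially bounded encoding length does too.

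For \ref{D-min} I would use the joint reformulation \eqref{prob:p_min}. Suppose $f(\lambda)\le K$ for some $\lambda\in\Lambda$; the case $f(\lambda)=-\infty$ is included. Then the system $(A+\lambda D)\mathbf x\le \mathbf b$, $\mathbf x\ge 0$, $\mathbf c^\top\mathbf x\le K$ has a solution. Fix the basic structure of a vertex $\mathbf x$ of this polyhedron for that $\lambda$. That is, fix a set of tight rows forming a square nonsingular subsystem $B(\lambda)\mathbf x_B=\mathbf b_B$ whose entries are affine in $\lambda$. The remaining inequalities, written via Cramer's rule, become polynomial inequalities in $\lambda$ of degree at most $n+1$, of the form $\det B(\lambda)\cdot(\text{row}) - (\text{rhs})\cdot\det B(\lambda)$ with signs. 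Their coefficients have polynomial bit size. The set of $\lambda\in\Lambda$ satisfying them, together with $\det B(\lambda)\neq 0$, is a finite union of intervals whose endpoints are roots of univariate integer polynomials of polynomial size, or $\lambda_1,\lambda_2$. Such a nonempty set contains a rational point of polynomial size: either an endpoint that is rational, or, if the set has nonempty interior, a rational strictly inside. Root separation bounds (Mahler/Cauchy type) guarantee that a rational of polynomially many bits lies between distinct algebraic roots. If the set is a single irrational point we cannot use it directly. I would handle this by taking the certificate to be the basis together with the polynomial data, and verifying existence of a root in the right interval via Sturm sequences in polynomial time. This is an acceptable NP certificate because the verifier need not produce $\lambda^\ast$ explicitly.

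For \ref{D-max}, the condition $f(\lambda)\ge K$ means one of two things. Either $\mathcal P(\lambda)$ is infeasible, which a Farkas certificate $\mathbf y\le 0$ with $(A+\lambda D)^\top\mathbf y\le 0$, $\mathbf b^\top\mathbf y>0$ (suitably normalized) witnesses. Or, by strong duality, there is a dual feasible $\mathbf y\le 0$ with $(A+\lambda D)^\top\mathbf y\le\mathbf c$ and $\mathbf b^\top\mathbf y\ge K$. In both cases this is again a parametric polyhedron in $\mathbf y$ with matrix affine in $\lambda$, so the same basis-plus-univariate-polynomial argument applies verbatim. The certificate is a flag (infeasible/feasible), a basis of the dual or Farkas system, and a Sturm-checkable interval (or a short rational $\lambda^\ast$) on which the resulting polynomial sign conditions hold. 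The Farkas branch is exactly why Remark~\ref{remark:bilevel_inf_unb} matters: $+\infty$ counts as a yes-instance.

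The main obstacle is the encoding-size step: showing that the witness $\lambda$ can be taken small, or can be certified without being written down. I would first attempt the rational-point argument via root separation. I would fall back on the Sturm-sequence verification, equivalently on the polynomial-time decidability of the existential theory of the reals in one variable with a fixed semialgebraic description, only for the degenerate isolated-root case.
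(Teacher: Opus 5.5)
Your proposal is correct and follows essentially the paper's route: guess a primal, dual, or Farkas basis, use Cramer's rule to turn feasibility of the corresponding vertex into a system of univariate polynomial sign conditions in $\lambda$ (including $\det\neq 0$), and decide that system in polynomial time, which the paper does by invoking Renegar's theorem in one variable and you do with Sturm sequences, the existential theory of the reals in one variable. The differences are minor: adding $\mathbf c^\top\mathbf x\le K$ to the polyhedron lets a single vertex certificate absorb the case $f(\lambda)=-\infty$, where the paper uses a separate recession-ray check, and the short-rational-witness detour is unnecessary because the basis certificate together with the univariate decision procedure already works in every case.
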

Our proof builds on the certificate approach used by Buchheim~\cite{buchheim2023bilevel} to establish membership in NP for bilevel linear programming.
In that setting, an optimal basis of the lower-level problem serves as a certificate, and once the basis is fixed, verification reduces to checking the feasibility of a system of linear constraints.
Two additional difficulties arise in our setting.
First, because the lower-level constraint matrix depends on $\lambda$, fixing a basis yields rational conditions in $\lambda$, which become univariate polynomial constraints.
Second, as observed in Remark~\ref{remark:bilevel_inf_unb}, the lower-level problem may be unbounded or infeasible for some values of $\lambda$, in which case an
optimal basis certificate need not exist.
We therefore extend the strategy of~\cite{buchheim2023bilevel} by showing that the resulting univariate polynomial conditions can be verified in polynomial time and by using ray certificates to handle unboundedness and infeasibility.
\begin{proof}

We rewrite the lower-level linear program~\ref{prob:parametric_lp} in standard form by introducing slack variables $\mathbf{s} \in \mathbb{R}^m_{\ge 0}$. Setting $N = n+m$, $\mathbf{z} = (\mathbf{x}, \mathbf{s}) \in \mathbb{R}^N_{\ge 0}$, and $\mathbf{\bar c} = (\mathbf{c}, \mathbf{0}) \in \mathbb{R}^N$, the value function becomes
\[ 
f(\lambda)=\min_{\mathbf{z} \in \mathbb{R}^N} \left\{\mathbf{\bar c}^\top \mathbf{z}  \;\middle|\;  \mathbf{z} \in \mXlambda:= \left\{\mathbf{z} \ge \mathbf{0} \ \middle|\ \left[ A+ \lambda D \;\middle|\; I_m \right] \mathbf{z}= \mathbf{b}\right\}\right\}.
\]

\subsubsection{Proof for \ref{D-min}}
Problem \ref{D-min} is a YES-instance if and only if there exists $\lambda \in [\lambda_1, \lambda_2]$ such that $\mXlambda \neq \emptyset$ and either $f(\lambda) \le K$ or $f(\lambda) = -\infty$. 

The nondeterministic polynomial-time machine for \ref{D-min} operates as follows:
\begin{enumerate}[label=(\roman*)]
    \item Nondeterministically guess a basis $\mB \subseteq \{1, \ldots, N\}$ with $|\mB| = m$, and an indicator $j \in \bar{\mB} \cup \{\emptyset\}$, where $\bar{\mB} = \{1, \ldots, N\} \setminus \mB$.
    \item\label{step:optimalii}  If $j = \emptyset$, verify whether there exists $\lambda \in [\lambda_1, \lambda_2]$ such that $\mB$ yields a basic feasible solution satisfying $\mathbf{\bar c}^\top \mathbf{z}(\lambda) \le K$.
    \item\label{step:unboundediii}  If $j \in \bar{\mB}$, verify whether there exists $\lambda \in [\lambda_1, \lambda_2]$ such that $\mB$ is primal feasible and $j$ induces an unbounded recession ray certifying $f(\lambda) = -\infty$.
    \item Accept if the verification in either step (ii) or step (iii) succeeds.
\end{enumerate}

To perform the verification, let $M_{\mB}(\lambda)$ be the $m \times m$ submatrix of $[A+\lambda D \mid I_m]$ corresponding to $\mB$. By Cramer's rule, the basic variables $\mathbf{z}_{\mB}(\lambda) = M_{\mB}(\lambda)^{-1}\mathbf{b}$ and the components of the direction vector $\mathbf{d}_{\mB}^{(j)}(\lambda) = -M_{\mB}(\lambda)^{-1}\left[A+\lambda D \mid I_m\right]_j$ are the rational functions in $\lambda$
\[
z_{{\mB},i}(\lambda) = \frac{p_{\mB,i}(\lambda)}{q_{\mB}(\lambda)}, 
\qquad
d_{{\mB},i}^{(j)}(\lambda) = \frac{r_{\mB,i}^{(j)}(\lambda)}{q_{\mB}(\lambda)}, 
\qquad
q_{\mB}(\lambda) = \det(M_{\mB}(\lambda)),
\]
where $p_{\mB,i}(\lambda) = \det(M_{\mB,i}(\lambda))$ and $r_{\mB,i}^{(j)}(\lambda) = \det(M_{\mB,i}^{(j)}(\lambda))$, with $M_{\mB,i}(\lambda)$ and $M_{\mB,i}^{(j)}(\lambda)$ obtained by replacing the $i$-th column of $M_{\mB}(\lambda)$ with $\mathbf{b}$ and $-\left[A+\lambda D \mid I_m\right]_j$, respectively. By Lemma~\ref{lem:parametric-det} in Appendix~\ref{appendix:np}, the polynomials $p_{\mB,i}(\lambda)$, $r_{\mB,i}^{(j)}(\lambda)$, and $q_{\mB}(\lambda)$ have degree at most $m$ with polynomially bounded coefficients.

For $j = \emptyset$, step~\ref{step:optimalii} checks the feasibility over $\lambda \in [\lambda_1, \lambda_2]$ of
\begin{equation} \label{sys:dmin_bounded}
\mathbf{z}_{\mB}(\lambda) \ge \mathbf{0},  \qquad \mathbf{\bar c}_{\mB}^\top \mathbf{z}_{\mB}(\lambda) \le K, \qquad q_{\mB}(\lambda) \neq 0.
\end{equation}
For $j \in \bar{\mB}$, step~\ref{step:unboundediii} sets $d_j^{(j)}(\lambda) = 1$ and $d_k^{(j)}(\lambda) = 0$ for $k \in \bar{\mB} \setminus \{j\}$, and checks the feasibility over $\lambda \in [\lambda_1, \lambda_2]$ of
\begin{equation} \label{sys:dmin_unbounded_rigorous}
\mathbf{z}_{\mB}(\lambda) \ge \mathbf{0}, \qquad \mathbf{d}_{\mB}^{(j)}(\lambda) \ge \mathbf{0}, \qquad \mathbf{\bar c}^\top \mathbf{d}^{(j)}(\lambda) \le -1, \qquad q_{\mB}(\lambda) \neq 0.
\end{equation}

Write $K=\frac{u}{v}$, where $u\in\mathbb{Z}$ and $v\in\mathbb{Z}^+_0$.
Multiplying the objective inequality by $v$ and then the inequalities in \eqref{sys:dmin_bounded} and \eqref{sys:dmin_unbounded_rigorous} by $\left(q_{\mB}(\lambda)\right)^2$ clears all denominators without altering inequality signs. To enforce $q_{\mB}(\lambda) \neq 0$, we include the strict inequality $\left(q_{\mB}(\lambda)\right)^2 > 0$. In either step, this yields an equivalent system of univariate polynomial inequalities in $\lambda$ of degree at most $2m$ with coefficients of polynomial encoding length.
By Lemma~\ref{lem:rational-to-integer} in Appendix~\ref{appendix:np}, the resulting system can be transformed in polynomial time into an equivalent system of univariate polynomial constraints with integer coefficients. Therefore, Lemma~\ref{lem:univariate} in Appendix~\ref{appendix:np} applies and feasibility of this resulting system can be decided in polynomial time. This completes the proof that \ref{D-min} belongs to NP.

\subsubsection{Proof for \ref{D-max}}
By Remark~\ref{remark:bilevel_inf_unb}, if the lower-level problem~\ref{prob:parametric_lp} is infeasible, $f(\lambda) = +\infty \ge K$ holds trivially. Otherwise, for any lower-level dual feasible solution $\mathbf{y} \le \mathbf{0}$ satisfying $(A+\lambda D)^\top \mathbf{y} \le \mathbf{c}$, weak duality implies $f(\lambda) \ge \mathbf{b}^\top \mathbf{y}$. Hence, \ref{D-max} is a YES-instance if and only if there exists $\lambda \in [\lambda_1, \lambda_2]$ at which the dual program of~\ref{prob:parametric_lp} is feasible with optimal value at least $K$, or~\ref{prob:parametric_lp} is infeasible. Thus, the rest of the proof follows a logic similar to the one for \ref{D-min}, with step~\ref{step:optimalii} being adapted to focus on the optimality of the dual basis and with step~\ref{step:unboundediii}  being adapted to focus on the infeasibility of the lower-level.

We express the dual of $\mathcal{P}(\lambda)$ in standard form by introducing slack variables $\mathbf{v} \in \mathbb{R}^n_{\ge 0}$ and defining $\mathbf{w} = (-\mathbf{y}, \mathbf{v}) \in \mathbb{R}^N_{\ge 0}$, namely
\[
\max_{\mathbf{w} \ge \mathbf{0}} \left\{ -\bar{\mathbf{b}}^\top \mathbf{w} \;\middle|\; \left[-(A+\lambda D)^\top \;\middle|\; I_n \right] \mathbf{w} = \mathbf{c} \right\},
\]
where $\bar{\mathbf{b}} = (\mathbf{b}, \mathbf{0}) \in \mathbb{R}^N$. Let $M_D(\lambda) = [-(A+\lambda D)^\top \mid I_n] \in \mathbb{R}^{n \times N}$ be the dual constraint matrix. 

The nondeterministic polynomial-time machine for \ref{D-max} operates as follows:
\begin{enumerate}[label=(\roman*)]
    \item Nondeterministically guess a dual basis $\mB_y \subseteq \{1, \ldots, N\}$ with $|\mB_y| = n$, and an indicator $j \in \bar{\mB}_y \cup \{\emptyset\}$, where $\bar{\mB}_y = \{1, \ldots, N\} \setminus \mB_y$.
    \item If $j = \emptyset$, verify whether there exists $\lambda \in [\lambda_1, \lambda_2]$ such that $\mB_y$ yields a dual basic feasible solution satisfying $-\bar{\mathbf{b}}^\top \mathbf{w}(\lambda) \ge K$.
    \item If $j \in \bar{\mB}_y$, verify whether there exists $\lambda \in [\lambda_1, \lambda_2]$ such that $j$ induces a ray in the homogeneous dual system certifying primal infeasibility via Farkas' Lemma.
    \item Accept if the verification in either step (ii) or step (iii) succeeds.
\end{enumerate}

Let $M_{D,\mB_y}(\lambda)$ be the $n \times n$ submatrix of $M_D(\lambda)$ corresponding to $\mB_y$. By Cramer's rule, the dual basic variables $\mathbf{w}_{\mB_y}(\lambda) = M_{D,\mB_y}(\lambda)^{-1}\mathbf{c}$ and the basic components of the dual direction vector $\mathbf{d}_y^{(j)}(\lambda) = -M_{D,\mB_y}(\lambda)^{-1}\left[M_D(\lambda)\right]_j$ are rational functions in $\lambda$ with common denominator determinant $q_{\mB_y}(\lambda) = \det(M_{D,\mB_y}(\lambda))$.

For $j = \emptyset$, step (ii) checks the feasibility over $\lambda \in [\lambda_1, \lambda_2]$ of
\begin{equation} \label{sys:dmax_bounded}
\mathbf{w}_{\mB_y}(\lambda) \ge \mathbf{0}, \qquad \mathbf{w}_{\bar{\mB}_y}(\lambda) = \mathbf{0}, \qquad -\bar{\mathbf{b}}^\top \mathbf{w}(\lambda) \ge K, \qquad q_{\mB_y}(\lambda) \neq 0.
\end{equation}
For $j \in \bar{\mB}_y$, step (iii) sets $d_{y,j}^{(j)}(\lambda) = 1$ and $d_{y,k}^{(j)}(\lambda) = 0$ for $k \in \bar{\mB}_y \setminus \{j\}$, certifying primal infeasibility by checking the feasibility over $\lambda \in [\lambda_1, \lambda_2]$ of
\begin{equation} \label{sys:dmax_infeasible}
\mathbf{d}_{y,\mB_y}^{(j)}(\lambda) \ge \mathbf{0}, \qquad -\bar{\mathbf{b}}^\top \mathbf{d}_y^{(j)}(\lambda) \ge 1, \qquad q_{\mB_y}(\lambda) \neq 0.
\end{equation}

Write $K=\frac{u}{v}$, where $u\in\mathbb{Z}$ and $v\in\mathbb{Z}^+_0$.
Multiplying the objective inequality by $v$ and then the inequalities in \eqref{sys:dmax_bounded} and \eqref{sys:dmax_infeasible} by $\left(q_{\mB_y}(\lambda)\right)^2$ clears all denominators without altering inequality signs, and enforcing $\left(q_{\mB_y}(\lambda)\right)^2 > 0$ guarantees non-singularity. This converts both conditions into equivalent systems of univariate polynomial inequalities in $\lambda$ of degree at most $2n$ with coefficients of polynomial encoding length.
By Lemma~\ref{lem:rational-to-integer} in Appendix~\ref{appendix:np}, the resulting system can be transformed in polynomial time into an equivalent system of univariate polynomial constraints with integer coefficients. Therefore, Lemma~\ref{lem:univariate} in Appendix~\ref{appendix:np} applies and feasibility over $\left[\lambda_1, \lambda_2\right]$ is decidable in polynomial time. This completes the proof that \ref{D-max} belongs to NP.
\end{proof}

\begin{proof}[Proof of Theorem~\ref{theorem:main}]
The statement follows directly by combining the NP-hardness results, Theorems~\ref{thmmin} and~\ref{thmmax}, with the membership one in
Theorem~\ref{npinclusion}.
\end{proof}

\subsection{Tractable cases}\label{sec:tractable}
We now identify restricted assumptions on $D$ under which the problem of finding either the minimum or the maximum is no longer NP-hard. 

\subsubsection{Uniform sign in \texorpdfstring{$D$}{D}}
We consider the case where all coefficients of $D$ have the same sign.
In this setting, the value function $f(\lambda)$ is monotone on the interval $[\lambda_1,\lambda_2]$.
Then, the optimal values are attained at the endpoints of the interval.

\begin{proposition}\label{prop:uniform_sign}
Assume that all entries of $D$ have the same sign.
\begin{itemize}
    \item If $D_{ij} \geq 0 \ \forall i,j$, then $f(\lambda)$ is nondecreasing on $[\lambda_1,\lambda_2]$.
    
    \item If $D_{ij} \leq 0 \ \forall i,j$, then $f(\lambda)$ is nonincreasing on $[\lambda_1,\lambda_2]$.
\end{itemize}
\end{proposition}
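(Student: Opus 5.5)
The plan is a direct feasible-set inclusion argument that uses the nonnegativity of $\mathbf x$ and needs no duality. Take the case $D_{ij}\ge 0$ for all $i,j$, and fix $\lambda_1\le \lambda<\lambda'\le\lambda_2$. Denote the feasible set of $\mathcal P(\lambda)$ by
\[
F(\lambda)=\{\mathbf x\ge \mathbf 0 \mid (A+\lambda D)\mathbf x\le \mathbf b\}.
\]
We will show $F(\lambda')\subseteq F(\lambda)$. Since $f$ is a minimum of the same objective $\mathbf c^\top\mathbf x$ over these sets, minimizing over the smaller set gives $f(\lambda)\le f(\lambda')$, which is the claimed monotonicity.

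The inclusion follows from a componentwise estimate. Let $\mathbf x\in F(\lambda')$. Because $D\ge 0$ entrywise and $\mathbf x\ge \mathbf 0$, we have $D\mathbf x\ge \mathbf 0$. Multiplying by $\lambda'-\lambda>0$ gives $\lambda D\mathbf x\le \lambda' D\mathbf x$, and therefore
\[
(A+\lambda D)\mathbf x\le (A+\lambda' D)\mathbf x\le \mathbf b .
\]
So $\mathbf x\in F(\lambda)$. The sign of $\lambda$ itself plays no role; only the ordering $\lambda<\lambda'$ matters.

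Remark~\ref{remark:bilevel_inf_unb} allows the values $\pm\infty$, so we must check that the conclusion holds in the extended reals. We use the convention that $f(\lambda)=+\infty$ when $F(\lambda)=\emptyset$. If $F(\lambda')=\emptyset$, then $f(\lambda')=+\infty$ and the inequality is trivial. If $f(\lambda')=-\infty$, then $F(\lambda')$ contains points with arbitrarily negative objective, and these also lie in $F(\lambda)$, so $f(\lambda)=-\infty$. In every case the infimum over a superset is no larger, so $f(\lambda)\le f(\lambda')$ holds in $\mathbb R\cup\{\pm\infty\}$.

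For the case $D_{ij}\le 0$, the same chain of inequalities reverses and gives $F(\lambda)\subseteq F(\lambda')$ for $\lambda<\lambda'$, hence $f$ is nonincreasing. Alternatively, apply the first case to $-D$ with the reparametrization $\lambda\mapsto-\lambda$ on $[-\lambda_2,-\lambda_1]$.

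No step here is really an obstacle. The point needing care is that the argument relies on $\mathbf x\ge\mathbf 0$ in $\mathcal P(\lambda)$, since without it $D\mathbf x$ would have no fixed sign. It also relies on treating the infeasible and unbounded cases through the set inclusion rather than through an optimal solution.
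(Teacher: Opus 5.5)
Your proof is correct and follows essentially the same route as the paper: both use $D\mathbf x\ge\mathbf 0$ (from $D\ge 0$ and $\mathbf x\ge\mathbf 0$) to get the inclusion $F(\lambda'')\subseteq F(\lambda')$ for $\lambda'\le\lambda''$, and conclude that the minimum over the smaller set is no smaller. Your explicit treatment of the infeasible and unbounded cases in $\mathbb R\cup\{\pm\infty\}$, and the reduction of the $D\le 0$ case via $-D$ and $\lambda\mapsto-\lambda$, are valid details that the paper leaves implicit.
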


\begin{proof}
Let $\lambda',\lambda'' \in [\lambda_1,\lambda_2]$ with $\lambda' \le \lambda''$.

\medskip
\noindent
Assume first that $D_{ij} \geq 0 \ \forall i,j$.
Since $\mathbf x \ge \mathbf 0$, we have $D\mathbf x \ge \mathbf 0$ and therefore
\[
\lambda' D\mathbf x \le \lambda'' D\mathbf x.
\]
Hence,
\[
A\mathbf x + \lambda' D\mathbf x
\le
A\mathbf x + \lambda'' D\mathbf x.
\]
It follows that every feasible solution for $\lambda''$ is also feasible for $\lambda'$, i.e.,
\[
F(\lambda'') \subseteq F(\lambda'),
\]
where $F(\lambda)$ denotes the feasible set of $\mathcal P(\lambda)$.
Therefore,
\[
f(\lambda') \le f(\lambda''),
\]
showing that $f$ is nondecreasing on $[\lambda_1,\lambda_2]$.

\medskip
\noindent
The case $D_{ij} \leq 0 \ \forall i,j$ follows similarly and yields that $f$ is nonincreasing on $[\lambda_1,\lambda_2]$.
\end{proof}

\begin{corollary}
Assume that all entries of $D$ have the same sign.
\begin{itemize}
    \item If $D_{ij} \geq 0 \ \forall i,j$, then
    \[
    \min_{\lambda \in [\lambda_1,\lambda_2]} f(\lambda)=f(\lambda_1),
    \qquad
    \max_{\lambda \in [\lambda_1,\lambda_2]} f(\lambda)=f(\lambda_2).
    \]

    \item If $D_{ij} \leq 0 \ \forall i,j$, then
    \[
    \min_{\lambda \in [\lambda_1,\lambda_2]} f(\lambda)=f(\lambda_2),
    \qquad
    \max_{\lambda \in [\lambda_1,\lambda_2]} f(\lambda)=f(\lambda_1).
    \]
\end{itemize}

Consequently, problems~\eqref{min--min} and~\eqref{max--min} are polynomially solvable,
since computing $f(\lambda)$ for a single value of $\lambda$ can be done in polynomial time.
\end{corollary}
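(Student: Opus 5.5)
The corollary follows almost directly from Proposition~\ref{prop:uniform_sign}, so the plan is to read off the extrema from monotonicity and then argue that each endpoint evaluation is a single linear program. First I would treat the case $D_{ij}\ge 0$ for all $i,j$. By Proposition~\ref{prop:uniform_sign}, $f$ is nondecreasing on $[\lambda_1,\lambda_2]$. Hence $f(\lambda_1)\le f(\lambda)\le f(\lambda_2)$ for every $\lambda$ in the interval, in the extended reals. This gives $\min_{\lambda} f(\lambda)=f(\lambda_1)$ and $\max_{\lambda} f(\lambda)=f(\lambda_2)$, and both extrema are attained at the endpoints.

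One point needs care. The values $f(\lambda)=\pm\infty$, arising from unbounded or infeasible lower levels, are allowed, and they must be handled with the conventions of Remark~\ref{remark:bilevel_inf_unb}. The inclusion argument in the proof of Proposition~\ref{prop:uniform_sign} is $F(\lambda'')\subseteq F(\lambda')$, and it gives $f(\lambda')\le f(\lambda'')$ in the extended sense. This uses the convention that a minimum over the empty set is $+\infty$, and an unbounded program has value $-\infty$. Consequently the endpoint identities remain valid in these cases as well.

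The case $D_{ij}\le 0$ is symmetric. Here $f$ is nonincreasing, so the minimum is $f(\lambda_2)$ and the maximum is $f(\lambda_1)$. Alternatively, one can substitute $\lambda\mapsto -\lambda$ and $D\mapsto -D$ to reduce to the first case.

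For polynomial solvability, I would note that $\lambda_1,\lambda_2$ are rational. Fixing $\lambda=\lambda_k$ therefore yields the linear program $\mathcal P(\lambda_k)$, whose constraint matrix $A+\lambda_k D$ is rational with encoding length polynomial in the input. A polynomial-time LP algorithm such as the ellipsoid or an interior-point method then decides feasibility and boundedness and returns the optimal value. This gives $f(\lambda_k)$ exactly, including the cases $\pm\infty$. Since only two such solves are needed, both \eqref{min--min} and \eqref{max--min} are solved in polynomial time.

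No step is a real obstacle. The only thing to check is the encoding-length bound on $A+\lambda_k D$: each entry is $A_{ij}+\lambda_k D_{ij}$, a sum of a rational and a product of two rationals, so its bit size is linear in the input.
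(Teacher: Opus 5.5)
Your proposal is correct and follows the paper's route: the endpoint identities come from the monotonicity in Proposition~\ref{prop:uniform_sign}, and polynomial solvability comes from solving the single linear program $\mathcal P(\lambda_k)$ at each endpoint. You also fill in details the paper leaves implicit: the extended-real values $\pm\infty$, handled via $F(\lambda'')\subseteq F(\lambda')$ with $\inf\emptyset=+\infty$, and the encoding length of $A+\lambda_k D$.
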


\noindent
\textbf{Observation:} Having all coefficients of $D$ with the same sign cannot occur for models with parametric equality constraints, since such an equality is written as two inequalities whose coefficients have opposite signs to match the canonical model we consider in this paper.

\subsubsection{Exactly one non-zero row in \texorpdfstring{$D$}{D}}
We consider the case where $D$ has exactly one non-zero row.
This implies that only one constraint, denoted by $i_0$, is affected by the parameter $\lambda$.
The problem can then be written as
\begin{equation*}
\begin{aligned}
    \min_{\mathbf{x}, \, \lambda} \quad & \mathbf{c}^\top \mathbf{x} \\
    \text{s.t.} \quad & B \mathbf{x} \le \boldsymbol{\beta}, \\
    & \mathbf{a_{i_0}} \mathbf x + \lambda \mathbf{d_{i_0}} \mathbf x \leq b_{i_0}, \\
    & \mathbf{x} \ge \mathbf{0}, \\
    & \lambda \in [\lambda_1, \lambda_2]
\end{aligned}
\tag{$\mathcal{P}_{\min,\text{row}}$}\label{prob:p_min_row}
\end{equation*}
In this setting, we prove that the value function $f(\lambda)$ is quasi-concave on $[\lambda_1,\lambda_2]$.
As a consequence, the minimum over $\lambda$ is reached at one of the bounds $\lambda_1$ or $\lambda_2$.
The maximum can also be found in polynomial time, but requires a bisection procedure. 
\begin{definition}
A function $f$ is quasi-concave on an interval if for any $\lambda', \lambda''$ in the interval and any $\alpha \in [0,1]$,
\[
f(\alpha \lambda' + (1-\alpha)\lambda'') \geq \min\{f(\lambda'), f(\lambda'')\}.
\]
\end{definition}
\begin{proposition}\label{prop:one_row}
If $D$ has exactly one non-zero row, then the value function $f(\lambda)$ is quasi-concave on $[\lambda_1, \lambda_2]$.
\end{proposition}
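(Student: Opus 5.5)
We prove quasi-concavity through the sublevel sets: $f$ is quasi-concave on $[\lambda_1,\lambda_2]$ exactly when, for every $\lambda',\lambda''$ in the interval, every $\alpha\in[0,1]$, and $\bar\lambda=\alpha\lambda'+(1-\alpha)\lambda''$, we have $f(\bar\lambda)\ge\min\{f(\lambda'),f(\lambda'')\}$. Equivalently, we show that every feasible point for $\bar\lambda$ is feasible for $\lambda'$ or for $\lambda''$, that is, $F(\bar\lambda)\subseteq F(\lambda')\cup F(\lambda'')$. Granting this inclusion, the minimum of $\mathbf c^\top\mathbf x$ over $F(\bar\lambda)$ is at least the minimum over $F(\lambda')\cup F(\lambda'')$, which equals $\min\{f(\lambda'),f(\lambda'')\}$. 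The conventions $f=+\infty$ on infeasibility and $f=-\infty$ on unboundedness are compatible with this: if $F(\bar\lambda)=\emptyset$ then $f(\bar\lambda)=+\infty$ and the inequality holds trivially, and unboundedness at $\bar\lambda$ forces unboundedness on the union, hence at $\lambda'$ or at $\lambda''$.

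To prove the inclusion, take $\mathbf x\in F(\bar\lambda)$. The constraints $B\mathbf x\le\boldsymbol\beta$ and $\mathbf x\ge\mathbf 0$ do not depend on $\lambda$, so only row $i_0$ matters. Define the affine function
\[
g(\lambda)=\mathbf a_{i_0}\mathbf x+\lambda\,\mathbf d_{i_0}\mathbf x-b_{i_0}.
\]
Because $\mathbf x$ is fixed, $g$ is affine in the scalar $\lambda$, and $g(\bar\lambda)=\alpha g(\lambda')+(1-\alpha)g(\lambda'')\le 0$. A convex combination of two numbers is nonpositive only if at least one of them is nonpositive, so $g(\lambda')\le0$ or $g(\lambda'')\le0$. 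Therefore $\mathbf x\in F(\lambda')$ or $\mathbf x\in F(\lambda'')$.

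This is where the single nonzero row is essential. With two parametric rows, one row could fail at $\lambda'$ while the other fails at $\lambda''$, and the inclusion would break. With one row, the argument is short, and the only real care is the bookkeeping of the infinite values of $f$ noted above.
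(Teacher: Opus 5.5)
Your proof is correct and follows the paper's route: for a fixed $\mathbf x$ the single parametric constraint is affine in $\lambda$, so a point feasible at $\hat\lambda$ cannot violate it at both $\lambda'$ and $\lambda''$. The paper applies this, by contradiction, only to an optimal solution $\mathbf x^{\star}$ at $\hat\lambda$. Your inclusion $F(\bar\lambda)\subseteq F(\lambda')\cup F(\lambda'')$ over all feasible points also covers the cases $f(\bar\lambda)=\pm\infty$, where no such $\mathbf x^{\star}$ exists. Two minor wording points: the inclusion is sufficient rather than \emph{equivalent}, and quasi-concavity concerns superlevel sets, not sublevel sets.
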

\begin{proof}
Let $\lambda',\lambda''\in[\lambda_1,\lambda_2]$ and define
\[
\hat\lambda=\alpha\lambda'+(1-\alpha)\lambda'', \qquad \alpha\in[0,1].
\]
Let $\mathbf x^{\star}\geq \mathbf 0$ be an optimal solution at $\hat\lambda$. Then,
\[
B\mathbf x^{\star}\le \boldsymbol{\beta},\quad (\mathbf{a_{i_0}} + \hat{\lambda} \mathbf{d_{i_0}}) \mathbf x^{\star} \leq b_{i_0}.
\]
Suppose, by contradiction, that $\mathbf x^{\star}$ is infeasible for both $\lambda'$ and $\lambda''$. Then,
\[
(\mathbf{a_{i_0}} + \lambda' \mathbf{d_{i_0}}) \mathbf x^{\star} > b_{i_0}
\qquad\text{and}\qquad
(\mathbf{a_{i_0}} + \lambda'' \mathbf{d_{i_0}}) \mathbf x^{\star} > b_{i_0}.
\]
Multiplying the first inequality by $\alpha \geq 0$ and the second by $1-\alpha \geq 0$, and summing, leads to
\[
(\mathbf{a_{i_0}} + \hat{\lambda} \mathbf{d_{i_0}}) \mathbf x^{\star} > b_{i_0},
\]
which contradicts feasibility at $\hat\lambda$.

Hence, $\mathbf x^{\star}$ is feasible for at least one of the problems associated with $\lambda'$ or $\lambda''$.
If it is feasible for $\lambda'$, then by optimality of $f(\lambda')$ in a minimization problem,
\[
f(\lambda')\le \mathbf c^\top \mathbf x^{\star}=f(\hat\lambda).
\]
Similarly, if it is feasible for $\lambda''$, then
\[
f(\lambda'')\le \mathbf c^\top \mathbf x^{\star}=f(\hat\lambda).
\]
Therefore,
\[
f(\hat\lambda)\ge \min\{f(\lambda'),f(\lambda'')\},
\]
which proves quasi-concavity.
\end{proof}
\noindent
\textbf{Observation:} the proof extends to the mixed-integer case since it relies on the same solution $\mathbf{x}^{\star}$ and does not construct new solutions through convex combinations. Hence, any integrality constraints satisfied by $\mathbf{x}^{\star}$ are preserved.

\begin{corollary}
Suppose $D$ has exactly one non-zero row. Then, the minimum value of $f$ over $[\lambda_1,\lambda_2]$ is attained at one of the endpoints, namely
\[
\min_{\lambda\in[\lambda_1,\lambda_2]} f(\lambda)
=
\min\{f(\lambda_1),f(\lambda_2)\}.
\]
\end{corollary}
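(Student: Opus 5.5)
The plan is to deduce the corollary directly from Proposition~\ref{prop:one_row}, using the quasi-concavity inequality at an arbitrary interior point. Fix any $\lambda\in[\lambda_1,\lambda_2]$ and write it as $\lambda=\alpha\lambda_1+(1-\alpha)\lambda_2$ with $\alpha=(\lambda_2-\lambda)/(\lambda_2-\lambda_1)\in[0,1]$. If $\lambda_1=\lambda_2$ the claim is trivial, so I assume $\lambda_1<\lambda_2$. Applying the definition of quasi-concavity with $\lambda'=\lambda_1$ and $\lambda''=\lambda_2$ gives
\[
f(\lambda)\ \ge\ \min\{f(\lambda_1),f(\lambda_2)\}.
\]
Taking the infimum over $\lambda$ yields $\min_{\lambda} f(\lambda)\ge\min\{f(\lambda_1),f(\lambda_2)\}$. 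The reverse inequality is immediate because both endpoints belong to the interval. Hence the minimum is attained at an endpoint.

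The one step that needs care is the extended-value setting. The value $f(\lambda)$ may be $+\infty$ when $\mathcal P(\lambda)$ is infeasible, or $-\infty$ when it is unbounded, and the proof of Proposition~\ref{prop:one_row} picks an optimal solution $\mathbf x^\star$ at $\hat\lambda$. So I would first check that the quasi-concavity inequality holds in the extended reals in the following three cases.

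\begin{itemize}
\item If $f(\hat\lambda)=+\infty$, the inequality is trivial.
\item If $f(\hat\lambda)$ is finite, an optimal $\mathbf x^\star$ exists because the problem is a feasible, bounded linear program, and the proof applies verbatim.
\item If $f(\hat\lambda)=-\infty$, I take feasible points $\mathbf x$ at $\hat\lambda$ with $\mathbf c^\top\mathbf x\le -k$ for each $k$. The same convex-combination argument shows that each such $\mathbf x$ is feasible at $\lambda'$ or at $\lambda''$. Then one of the endpoints receives infinitely many of these points, so $\min\{f(\lambda'),f(\lambda'')\}=-\infty$ as well.
\end{itemize}

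Since this argument uses only individual feasible points and never their optimality, it covers all three cases. I expect this extended-value case to be the only real obstacle; once it is settled, the corollary follows in one line. As a consequence, computing $\min_{\lambda} f$ reduces to solving the two linear programs $\mathcal P(\lambda_1)$ and $\mathcal P(\lambda_2)$, which can be done in polynomial time.
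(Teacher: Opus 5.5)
Your proof is correct and follows the paper's route: the corollary is an immediate consequence of the quasi-concavity in Proposition~\ref{prop:one_row}, applied with $\lambda'=\lambda_1$ and $\lambda''=\lambda_2$. Your extra check of the cases $f(\hat\lambda)=\pm\infty$ is a sound refinement. It argues from feasibility of individual points rather than from an optimal $\mathbf x^\star$, and so covers the extended-value situations that the paper's proof of the proposition leaves implicit.
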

\begin{corollary}
Suppose $D$ has exactly one non-zero row.
Then, a maximum of $f$ over $[\lambda_1,\lambda_2]$ can be approximated within accuracy $\varepsilon$ by applying the standard bisection method for quasiconvex optimization to $-f$ (see \cite{boyd2004convex}).
In particular, given an initial interval $[l,u]$ containing the optimal value of $\min_{\lambda\in[\lambda_1,\lambda_2]} -f(\lambda)$, the method terminates in
\begin{equation*}
O\!\left(
\log\!\left(
\frac{u-l}{\varepsilon}
\right)
\right)
\end{equation*}
iterations, each iteration requiring to solve one LP.
\end{corollary}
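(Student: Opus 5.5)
The plan is to apply the standard bisection method for quasiconvex minimization to $g := -f$ on $[\lambda_1,\lambda_2]$. By Proposition~\ref{prop:one_row}, $f$ is quasi-concave, so $g$ is quasi-convex. Hence for every threshold $t$ the sublevel set $S_t=\{\lambda\in[\lambda_1,\lambda_2] : g(\lambda)\le t\} = \{\lambda : f(\lambda)\ge -t\}$ is an interval. To see this, take $\lambda',\lambda''\in S_t$; quasi-concavity gives $f(\alpha\lambda'+(1-\alpha)\lambda'')\ge\min\{f(\lambda'),f(\lambda'')\}\ge -t$. Bisection on $t$ over $[l,u]$ then needs an oracle deciding whether $S_t\neq\emptyset$, i.e., whether $\max_\lambda f(\lambda)\ge -t$. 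Each halving of $[l,u]$ costs one oracle call, so $O(\log((u-l)/\varepsilon))$ calls suffice to bracket the optimal value within $\varepsilon$.

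The main step is implementing the oracle with one LP per iteration. The first attempt exploits the single parametric row. Write $\mathbf{a}_{i_0}\mathbf x+\lambda\mathbf d_{i_0}\mathbf x\le b_{i_0}$ and pass to the dual of $\mathcal P(\lambda)$. Only the dual column associated with row $i_0$ depends on $\lambda$, namely through $y_{i_0}(\mathbf a_{i_0}+\lambda\mathbf d_{i_0})^\top$. The substitution $w=\lambda y_{i_0}$ linearizes this term, with $\lambda_1 y_{i_0}\ge w\ge \lambda_2 y_{i_0}$ valid since $y_{i_0}\le 0$. The test ``$\exists\lambda,\mathbf y\le\mathbf 0$ with $(A+\lambda D)^\top\mathbf y\le\mathbf c$ and $\mathbf b^\top\mathbf y\ge -t$'' then becomes a linear feasibility problem in $(\mathbf y,w)$. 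Any solution with $y_{i_0}<0$ recovers $\lambda=w/y_{i_0}\in[\lambda_1,\lambda_2]$. A solution with $y_{i_0}=0$ forces $w=0$, and then $\mathbf y$ is dual feasible for every $\lambda$. By weak duality, a feasible point certifies $f(\lambda)\ge -t$ for the recovered $\lambda$. Conversely, if $f(\lambda)\ge -t$ with $f(\lambda)$ finite, strong duality provides such a $\mathbf y$.

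If this exact linearization fails in some edge case, the fallback is to use quasi-concavity more directly. The oracle would evaluate $f$ at points of a grid and bisect on $\lambda$ itself, comparing $f$ at two interior points to discard a subinterval. Each step would then be a single LP solve of $\mathcal P(\lambda)$ for fixed $\lambda$. That version still meets the ``one LP per iteration'' claim up to a constant factor.

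The delicate part is handling infeasible or unbounded lower levels in the oracle. If $\mathcal P(\lambda)$ is infeasible for some $\lambda$, then $f(\lambda)=+\infty$ and $S_t$ is nonempty for all $t$. I would first detect this with a Farkas-type LP, which linearizes in the same way with $w=\lambda y_{i_0}$. If it succeeds, the maximum is $+\infty$. The case $f=-\infty$ only shrinks $S_t$ and is consistent with the oracle answering ``no''. Finally, the corollary assumes that the initial interval $[l,u]$ is given, so the iteration count follows directly from halving $u-l$ until it drops below $\varepsilon$.
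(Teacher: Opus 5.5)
Your argument is correct, and it supplies what the paper leaves implicit. The paper gives no proof of this corollary beyond combining Proposition~\ref{prop:one_row} with the generic quasiconvex bisection scheme of Boyd and Vandenberghe, and it never exhibits the LP that each iteration is supposed to solve.

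You do exhibit it. With only row $i_0$ of $D$ non-zero, the dual of $\mathcal{P}(\lambda)$ contains the single bilinear product $\lambda y_{i_0}$ with $y_{i_0}\le 0$. The substitution $w=\lambda y_{i_0}$, $\lambda_2 y_{i_0}\le w\le \lambda_1 y_{i_0}$, describes exactly the pairs $(y_{i_0},\lambda y_{i_0})$ with $y_{i_0}\le 0$ and $\lambda\in[\lambda_1,\lambda_2]$. Together with the analogous Farkas LP for $f=+\infty$, and your observation that $f=-\infty$ is harmless, this gives an exact LP oracle.

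This buys more than the paper claims. Your oracle never uses quasi-concavity, because the predicate ``$\exists\lambda$ with $f(\lambda)\ge -t$'' is monotone in $t$ for any $f$. Moreover, if you replace the constraint $\mathbf b^\top\mathbf y\ge -t$ by the objective $\max \mathbf b^\top\mathbf y$, you obtain an exact linearization of \eqref{max_reformulation}. So after the Farkas check, a single LP returns $\max_\lambda f(\lambda)$ exactly, with no bisection and no $\varepsilon$. The paper's route ties the result to the structural property it has just proved, but leaves the ``one LP per iteration'' claim unsupported.

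Finally, drop your fallback paragraph. It is unnecessary, and as stated it would not work. Comparing $f$ at two probe points cannot tell you which side to discard when the two values are equal, and quasi-concave functions may have such plateaus. Localizing $\lambda$ also does not bound the error in the value, since $f$ can jump. And its iteration count would depend on $\lambda_2-\lambda_1$ rather than on $u-l$.
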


\noindent
\textbf{Observation:} In this case again, having exactly one non-zero row in $D$ cannot occur for models with parametric equality constraints, since such an equality is written as two inequalities to match the canonical model considered in this paper.

\subsubsection{Exactly one non-zero column in \texorpdfstring{$D$}{D}}
We consider the case where $D$ has exactly one non-zero column.
This implies that only one variable, denoted by $x_0$, is affected by the parameter $\lambda$.
We decompose the variable vector as $(x_0, \mathbf x)$, where $\mathbf x \in \mathbb{R}^{n-1}$.
Let $A_0$ denote the corresponding column of $A$ and $\mathbf d$ the non-zero column of $D$.
The problem can then be written as
\begin{equation*}
\begin{aligned}
    \min_{\mathbf{x}, \, x_0, \, \lambda} \quad & \mathbf{c}^\top \mathbf{x} + c_0 x_0 \\
    \text{s.t.} \quad & A\mathbf{x} + x_0(A_0 + \lambda \mathbf{d}) \le \mathbf{b}, \\
    & \mathbf{x} \ge \mathbf{0}, \quad x_0 \ge 0, \\
    & \lambda \in [\lambda_1, \lambda_2].
\end{aligned}
\tag{$\mathcal{P}_{\min,\text{col}}$}\label{prob:p_min_col}
\end{equation*}
In this setting, the value function $f(\lambda)$ is quasi-convex on $[\lambda_1, \lambda_2]$, 
which implies that the problems~\eqref{min--min} and~\eqref{max--min} are no longer NP-hard.
\begin{definition}
A function $f : \mathbb{R}^n \to \mathbb{R}$ is called \emph{quasi-convex}
if its domain $\mathrm{dom}(f)$ and all its sublevel sets
\[
S_t = \{x \in \mathrm{dom}(f) \mid f(x) \le t\},
\qquad \forall t \in \mathbb{R},
\]
are convex.
\end{definition}
\begin{proposition}\label{prop:one_col}
If $D$ has exactly one non-zero column, then the value function $f(\lambda)$ is quasi-convex on $[\lambda_1, \lambda_2]$.
\end{proposition}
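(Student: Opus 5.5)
We need to show that each sublevel set $S_t=\{\lambda\in[\lambda_1,\lambda_2] \mid f(\lambda)\le t\}$ is an interval. Fix $t$, take $\lambda',\lambda''\in S_t$ with $\lambda'<\lambda''$, and let $\hat\lambda=\alpha\lambda'+(1-\alpha)\lambda''$ with $\alpha\in(0,1)$. The plan is to build a feasible point at $\hat\lambda$ whose cost is at most $t$. Unlike in Proposition~\ref{prop:one_row}, this point will be constructed as a convex combination of solutions at $\lambda'$ and $\lambda''$, suitably rescaled. We first treat the case where both $f(\lambda')$ and $f(\lambda'')$ are attained. Let $(\mathbf x',x_0')$ and $(\mathbf x'',x_0'')$ be optimal at $\lambda'$ and $\lambda''$, with costs at most $t$.

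The key observation is that the only nonlinearity is the product $\lambda x_0$. Introducing $w=\lambda x_0$, the constraint becomes $A\mathbf x+A_0x_0+\mathbf d\,w\le\mathbf b$. This is linear in $(\mathbf x,x_0,w)$, and the objective does not involve $w$. The requirement $\lambda\in[\lambda_1,\lambda_2]$ becomes $\lambda_1x_0\le w\le\lambda_2x_0$ whenever $x_0>0$. Hence the set of $(\mathbf x,x_0,w)$ arising from the joint problem, restricted to cost $\le t$, is a convex polyhedron $Q_t$, at least after handling the case $x_0=0$. Points of $Q_t$ with $x_0>0$ correspond to feasible pairs with $\lambda=w/x_0$. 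Points with $x_0=0$ (and hence $w=0$) are feasible for every $\lambda$.

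Next we take a convex combination $\beta(\mathbf x',x_0',\lambda'x_0')+(1-\beta)(\mathbf x'',x_0'',\lambda''x_0'')$. It lies in $Q_t$, and its implied parameter is
\[
\frac{\beta\lambda'x_0'+(1-\beta)\lambda''x_0''}{\beta x_0'+(1-\beta)x_0''}.
\]
When both $x_0',x_0''>0$, this ratio is a continuous monotone function of $\beta\in[0,1]$ running from $\lambda''$ to $\lambda'$. We can therefore choose $\beta$ so that it equals $\hat\lambda$; explicitly, $\beta x_0'(\lambda'-\hat\lambda)+(1-\beta)x_0''(\lambda''-\hat\lambda)=0$, and this has a solution in $[0,1]$ because the two terms have opposite signs. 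The resulting point is feasible for $\mathcal P(\hat\lambda)$ with cost at most $t$, so $f(\hat\lambda)\le t$. If, say, $x_0'=0$, then $(\mathbf x',0)$ is feasible for every $\lambda$, in particular for $\hat\lambda$, and $f(\hat\lambda)\le t$ follows immediately. This gives convexity of $S_t$ and of the domain, where the domain is read as the set of $\lambda$ with $\mathcal P(\lambda)$ feasible.

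The main obstacle is the degenerate cases, where optima are not attained (unboundedness) or sublevel sets involve $f=-\infty$. I would handle these by working with arbitrary feasible points of cost $\le t$ rather than optimal ones. Such points exist whenever $f(\lambda)\le t$ with $t>f(\lambda)$, or $f(\lambda)=-\infty$. Applying the same convex-combination argument shows $f(\hat\lambda)\le t$ for every $t>\max\{f(\lambda'),f(\lambda'')\}$, hence $f(\hat\lambda)\le\max\{f(\lambda'),f(\lambda'')\}$. This is the equivalent characterization of quasi-convexity and yields the claim.
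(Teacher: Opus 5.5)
Your proof is correct and is essentially the paper's argument: solving your equation for $\beta$ gives $\beta=\alpha x_0''/(\alpha x_0''+(1-\alpha)x_0')$. This is exactly the paper's weight $\theta$, so your lifted convex combination is the same point the paper builds, and the identity $\hat\lambda\hat x_0=\theta\lambda' x_0'+(1-\theta)\lambda'' x_0''$ does the same work. Your handling of $x_0'=0$ (such a point is feasible for every $\lambda$) and of $f=-\infty$ is a bit more careful than the paper's remark, which treats only $x_0'=x_0''=0$; the concern about non-attained optima is unnecessary, though, since a finite LP optimum is always attained.
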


\begin{proof}

Let
\[
S_t = \{\lambda \in [\lambda_1,\lambda_2] \mid f(\lambda)\le t\}.
\]
We show that $S_t$ is convex.

Take $\lambda',\lambda'' \in S_t$.
Then, $ f(\lambda')\le t \quad \text{and} \quad f(\lambda'')\le t $.
Hence, there exist feasible solutions $(\mathbf x',x_0')$ for $\lambda'$ and $(\mathbf x'',x_0'')$ for $\lambda''$,
such that
\[
\mathbf c^\top \mathbf x' + c_0 x_0' \le t,
\qquad
\mathbf c^\top \mathbf x'' + c_0 x_0'' \le t,
\]
and
\[
A\mathbf x' + (A_0+\lambda' \mathbf d)x_0' \le \mathbf b,
\qquad
A\mathbf x'' + (A_0+\lambda'' \mathbf d)x_0'' \le \mathbf b.
\]

Let $\alpha \in [0,1]$ and let $\hat\lambda = \alpha \lambda' + (1-\alpha)\lambda''$.
To prove that $\hat\lambda\in S_t$, we must find a feasible solution for parameter $\hat\lambda$ with objective value at most $t$.

The idea is to construct such a solution from the feasible solutions associated with \(\lambda'\) and \(\lambda''\).
For this purpose, we introduce the coefficient

\[
\theta
=
\frac{\alpha x_0''}
{\alpha x_0'' + (1-\alpha)x_0'},
\quad \text{and \ therefore} \quad
1-\theta
=
\frac{(1-\alpha)x_0'}
{\alpha x_0'' + (1-\alpha)x_0'}.
\]
Set
\[
\hat{\mathbf x}
=
\theta \mathbf x'
+
(1-\theta)\mathbf x''
\quad \text{and} \quad
\hat x_0
=
\theta x_0'
+
(1-\theta)x_0''.
\]
By construction,
\[
\hat x_0
=
\frac{x_0' x_0''}
{\alpha x_0'' + (1-\alpha)x_0'}.
\]
Moreover,
\[
\hat\lambda \hat x_0
=
(\alpha\lambda' + (1-\alpha)\lambda'')
\frac{x_0'x_0''}
{\alpha x_0'' + (1-\alpha)x_0'}.
\]
On the other hand,
\[
\theta \lambda' x_0'
+
(1-\theta)\lambda'' x_0''
=
\frac{\alpha\lambda' x_0' x_0''
+
(1-\alpha)\lambda'' x_0' x_0''}
{\alpha x_0'' + (1-\alpha)x_0'}.
\]
Thus,
\[
\hat\lambda \hat x_0
=
\theta \lambda' x_0'
+
(1-\theta)\lambda'' x_0''.
\]
This identity allows the nonlinear term $\lambda x_0$ to be expressed as a convex combination of the corresponding terms associated with $\lambda'$ and $\lambda''$.

\noindent
We now verify feasibility, i.e.,
\[
A\hat{\mathbf x}
+
(A_0+\hat\lambda \mathbf d)\hat x_0.
\]
Using the definitions above,
\[
\begin{aligned}
A\hat{\mathbf x}
+
(A_0+\hat\lambda \mathbf d)\hat x_0
&=
A(\theta\mathbf x' + (1-\theta)\mathbf x'')
\\
&\quad
+
A_0(\theta x_0' + (1-\theta)x_0'')
\\
&\quad
+
\mathbf d\, \hat\lambda \hat x_0.
\end{aligned}
\]
Since
\[
\hat\lambda \hat x_0
=
\theta \lambda' x_0'
+
(1-\theta)\lambda'' x_0'',
\]
we obtain
\[
\begin{aligned}
A\hat{\mathbf x}
+
(A_0+\hat\lambda \mathbf d)\hat x_0
&=
\theta
\left(
A\mathbf x'
+
(A_0+\lambda'\mathbf d)x_0'
\right)
\\
&\quad
+
(1-\theta)
\left(
A\mathbf x''
+
(A_0+\lambda''\mathbf d)x_0''
\right).
\end{aligned}
\]
Because both vectors inside the parentheses satisfy the constraints
\[
A\mathbf x' + (A_0+\lambda'\mathbf d)x_0' \le \mathbf b,
\quad \text{and} \quad
A\mathbf x'' + (A_0+\lambda''\mathbf d)x_0'' \le \mathbf b,
\]
their convex combination also satisfies them. Therefore,
\[
A\hat{\mathbf x}
+
(A_0+\hat\lambda \mathbf d)\hat x_0
\le \mathbf b.
\]
Hence, $(\hat{\mathbf x},\hat x_0)$ is feasible for $\hat\lambda$.

\noindent
Finally,
\[
\begin{aligned}
\mathbf c^\top \hat{\mathbf x}
+
c_0 \hat x_0
&=
\mathbf c^\top
(\theta\mathbf x' + (1-\theta)\mathbf x'')
+
c_0(\theta x_0' + (1-\theta)x_0'')
\\
&=
\theta
(\mathbf c^\top\mathbf x' + c_0x_0')
+
(1-\theta)
(\mathbf c^\top\mathbf x'' + c_0x_0'')
\\
&\le
\theta t + (1-\theta)t
\\
&= t.
\end{aligned}
\]
Thus, $(\hat{\mathbf x}, \hat x_0)$ is a feasible solution for $\hat{\lambda}$ whose objective does not exceed $t$.
Consequently,
\[
f(\hat\lambda)\le t,
\]
which implies that $\hat\lambda \in S_t$.

Since every convex combination of two points in $S_t$ remains in $S_t$, the set $S_t$ is convex.
Therefore every sublevel set of $f$ is convex, and hence $f$ is quasi-convex.
\medskip

\noindent
\textit{Remark.}
If $x_0'=x_0''=0$, the definition of $\theta$ is not valid.
However, in this case the bilinear term disappears and feasibility follows directly by convexity of the remaining linear constraints.
\end{proof}

\noindent
\textbf{Observation:} The proof does not extend to the mixed-integer case.
The constructed point $(\hat{\mathbf x},\hat x_0)$ is obtained as a convex combination
of two feasible solutions. Hence, even if the initial solutions are integer,
there is no guarantee that $(\hat{\mathbf x},\hat x_0)$ is integer as well.
This observation can be illustrated on the following instance:
\[
\begin{aligned}
\min_{x_1,x_2,\lambda}\quad & x_2\\
\text{s.t.}\quad
& x_1 \le 4,\\
& x_1 \ge 0,\quad x_2 \ge 0,\\
& x_1+x_2 \ge 1,\\
& (-2.05+\lambda)x_1 -4x_2 \le -2,\\
& (1.95-\lambda)x_1 -4x_2 \le 2,\\
& x_1 \in \mathbb Z,\quad x_2\in\mathbb R_+,\\
& \lambda\in[-0.5,2].
\end{aligned}
\]
Here, only one column of $D$ is non-zero, namely the column associated with
the integer variable $x_1$.
The continuous relaxation satisfies the quasi-convexity property proved
above. However, in the mixed-integer case, this property may fail. Indeed,
the value function $f(\lambda)$ is not quasi-convex, as illustrated in
Figure~\ref{fig:one_col_mip_counterexample}.
\begin{figure}[ht]
\centering
\includegraphics[width=0.9\textwidth]{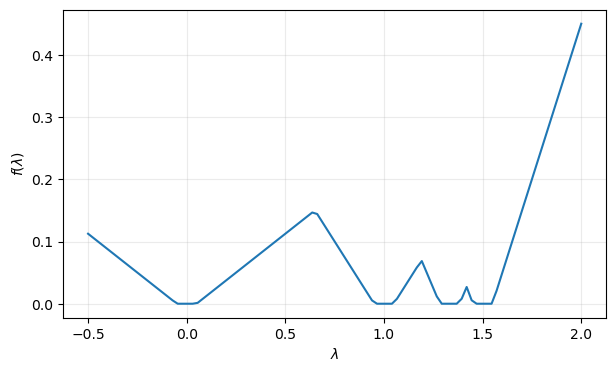}
\caption{Evolution of the value function $f(\lambda)$ for the mixed-integer instance with one non-zero column in $D$.}\label{fig:one_col_mip_counterexample}
\end{figure}

\begin{corollary}
Suppose $D$ has exactly one non-zero column.
Then, the maximum value of $f$ over $[\lambda_1,\lambda_2]$ is attained at one of the endpoints, namely
\[
\max_{\lambda\in[\lambda_1,\lambda_2]} f(\lambda)
=
\max\{f(\lambda_1),f(\lambda_2)\}.
\]
\end{corollary}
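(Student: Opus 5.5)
The corollary follows from the quasi-convexity established in Proposition~\ref{prop:one_col}, through the standard fact that a quasi-convex function on an interval attains its maximum at an endpoint. Fix an arbitrary $\hat\lambda \in [\lambda_1,\lambda_2]$ and write it as $\hat\lambda = \alpha\lambda_1 + (1-\alpha)\lambda_2$ with $\alpha\in[0,1]$. Set $t := \max\{f(\lambda_1), f(\lambda_2)\}$.

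\textbf{Finite case.} Suppose $t < +\infty$. Then $\lambda_1$ and $\lambda_2$ both belong to the sublevel set $S_t$. Proposition~\ref{prop:one_col} gives that $S_t$ is convex, so $\hat\lambda \in S_t$ and hence $f(\hat\lambda)\le t$. Since $\hat\lambda$ was arbitrary, $\sup_{\lambda\in[\lambda_1,\lambda_2]} f(\lambda) \le \max\{f(\lambda_1),f(\lambda_2)\}$. The reverse inequality is trivial because both endpoints lie in the interval. The maximum is therefore attained, and it is attained at an endpoint.

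\textbf{Extended values.} The main point needing care is that $f$ takes values in $\mathbb{R}\cup\{\pm\infty\}$, while Proposition~\ref{prop:one_col} was phrased with real thresholds $t$. Two situations have to be checked.

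\emph{Infinite endpoint value.} If $t=+\infty$, meaning one endpoint is infeasible, the inequality $f(\hat\lambda)\le t$ holds trivially and the identity remains true.

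\emph{Value $-\infty$ at an endpoint.} Suppose, for instance, that $f(\lambda_1)=-\infty$. Then $\lambda_1\in S_t$ for every real $t$. The construction in the proof of Proposition~\ref{prop:one_col} only used feasible points whose objective value is at most $t$, and such points exist for every $t$ in this case. So the same argument shows that whenever both endpoints belong to $S_t$, we get $f(\hat\lambda)\le t$.

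\textbf{Degenerate case $x_0'=x_0''=0$.} In this case the proof of Proposition~\ref{prop:one_col} falls back on its closing remark: the bilinear term vanishes, and a plain convex combination of the two feasible points works. I would cite that remark rather than redo the argument.

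Combining these cases, every interior value is dominated by $\max\{f(\lambda_1),f(\lambda_2)\}$, which proves the corollary. As a consequence, the max--min problem~\eqref{max--min} reduces to solving two linear programs, $\mathcal P(\lambda_1)$ and $\mathcal P(\lambda_2)$.
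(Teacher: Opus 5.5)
Your proposal is correct and follows the same reasoning as the paper, which states the corollary as an immediate consequence of the quasi-convexity in Proposition~\ref{prop:one_col}: convexity of the sublevel set $S_t$ for $t=\max\{f(\lambda_1),f(\lambda_2)\}$ forces $f(\hat\lambda)\le t$ at every interior point. Your separate treatment of the extended values $\pm\infty$, which the paper leaves implicit, is sound, and when both endpoint values are $-\infty$ it amounts to applying the bound for every real $t$.
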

\begin{corollary}
Suppose $D$ has exactly one non-zero column.
Then, a minimum of $f$ over $[\lambda_1,\lambda_2]$ can be approximated
within accuracy $\varepsilon$ by applying the standard bisection method
for quasiconvex optimization (see \cite{boyd2004convex}).
In particular, given an initial interval $[l,u]$ containing the optimal value
of $\min_{\lambda\in[\lambda_1,\lambda_2]} f(\lambda)$, the method terminates in
\begin{equation*}
O\!\left(
\log\!\left(
\frac{u-l}{\varepsilon}
\right)
\right)
\end{equation*}
iterations, each iteration involving just the computation of one LP.
\end{corollary}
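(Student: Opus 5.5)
The corollary follows from Proposition~\ref{prop:one_col}, which shows that $f$ is quasi-convex on $[\lambda_1,\lambda_2]$, combined with the standard bisection scheme of \cite{boyd2004convex}. The plan has three steps.

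\textbf{Step 1: turn each bisection test into one LP.} For a threshold $t$, the sublevel set $S_t=\{\lambda\in[\lambda_1,\lambda_2]\mid f(\lambda)\le t\}$ is an interval by Proposition~\ref{prop:one_col}. Hence the question ``is $\min_\lambda f(\lambda)\le t$?'' is equivalent to asking whether $S_t\neq\emptyset$, that is, whether there exist $(\mathbf x,x_0,\lambda)$ feasible for \eqref{prob:p_min_col} with $\mathbf c^\top\mathbf x+c_0x_0\le t$. This system is bilinear only through the product $\lambda x_0$. I substitute $w=\lambda x_0$ and add the constraints $\lambda_1x_0\le w\le\lambda_2x_0$, which gives the linear system
\[
A\mathbf x+A_0x_0+\mathbf d\,w\le\mathbf b,\qquad \mathbf c^\top\mathbf x+c_0x_0\le t,\qquad \lambda_1x_0\le w\le\lambda_2x_0,\qquad \mathbf x,x_0\ge 0.
\]
A feasible point with $x_0>0$ gives back $\lambda=w/x_0\in[\lambda_1,\lambda_2]$. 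A point with $x_0=0$ forces $w=0$, and then any $\lambda$ works.

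So each test is exactly one LP feasibility check. Alternatively, this substitution solves the min--min problem as a single LP, which is the same perspective-type argument as in the proof of Proposition~\ref{prop:one_col}.

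\textbf{Step 2: run the bisection on the objective value.} Starting from an interval $[l,u]$ containing $\min_\lambda f$, I test the midpoint $t=(l+u)/2$. If the LP of Step~1 is feasible, I set $u\leftarrow t$; otherwise I set $l\leftarrow t$. Each iteration halves the interval while preserving the invariant $l\le \min f\le u$. After $\lceil\log_2((u-l)/\varepsilon)\rceil$ iterations the width is at most $\varepsilon$, which gives the stated $O(\log((u-l)/\varepsilon))$ bound with one LP per iteration. To also return a near-optimal $\lambda$, I read it off as $w/x_0$ from the last feasible LP.

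\textbf{Main obstacle: degenerate cases.} The delicate points are $x_0=0$, which the paper also flags in the proof of Proposition~\ref{prop:one_col}, and the cases where $f$ takes the values $\pm\infty$. For $x_0=0$, I argue that the LP relaxation loses nothing because $w$ is forced to $0$. For the infinite values, I assume as in the statement that $[l,u]$ is given and finite, so that $\min f$ is finite. Under that assumption the equivalence in Step~1 is exact, and the iteration count follows directly.
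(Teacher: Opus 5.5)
Your proof is correct. Its skeleton, bisection on the threshold $t$ as in \cite{boyd2004convex}, is the one the paper has in mind; the paper gives no argument for this corollary beyond Proposition~\ref{prop:one_col}. The difference lies in what makes each iteration an LP, which the paper leaves implicit.

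Quasi-convexity of $f$ alone does not turn the test $S_t\neq\emptyset$ into an LP, because the joint system in $(\mathbf x,x_0,\lambda)$ is bilinear. Your exact substitution $w=\lambda x_0$ with $\lambda_1x_0\le w\le\lambda_2x_0$ supplies that oracle explicitly. It is the same perspective idea as the $\theta$-construction in the proof of Proposition~\ref{prop:one_col}: that construction is simply a convex combination in $(\mathbf x,x_0,w)$-space, since $\hat\lambda\hat x_0=\theta w'+(1-\theta)w''$.

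Your argument never actually uses that $S_t$ is an interval. The equivalence between $\min_\lambda f(\lambda)\le t$ and feasibility of your LP follows from the substitution alone, so the word ``hence'' after invoking Proposition~\ref{prop:one_col} in Step~1 is a non sequitur, though harmless.

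As you note, minimizing $\mathbf c^\top\mathbf x+c_0x_0$ over the same linear system computes $\inf_\lambda f(\lambda)$ exactly with a single LP. When this value is finite, it also returns a minimizer $\lambda=w/x_0$, with any $\lambda$ working if $x_0=0$. This is strictly stronger than the corollary: neither the bracket $[l,u]$ nor the tolerance $\varepsilon$ is needed, and the min--min problem is exactly polynomially solvable in this case.

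The paper's route is more generic, needing only convex sublevel sets plus some convex description of them, but it is vague on the per-iteration LP. Yours is concrete and is what actually justifies the ``one LP per iteration'' claim.
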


\noindent
\textbf{Observation:} Having exactly one non-zero column in $D$ can occur in models with parametric equality constraints, since the two inequalities encoding such an equality have their non-zero coefficients in the same column of $D$.

\section{Single-level Reformulations}\label{sec:reformulations}
The bilevel formulations \eqref{min--min} and \eqref{max--min} are not directly usable by standard optimization solvers.
We therefore rely on the reformulations introduced in Section~\ref{subsec:aux} to derive equivalent single-level problems that can be handled by nonlinear optimization solvers.

The min--min problem~\eqref{min--min} requires no further work.
As shown in Section~\ref{subsec:aux}, it is already equivalent to the single-level problem~\ref{prob:p_min}.

We then consider the max--min problem~\eqref{max--min}.
Unlike the min--min case, the formulation remains bilevel and cannot be directly reduced to a single-level optimization problem.
Our reformulation relies on the lower-level problem being feasible for all values of $\lambda$ in the interval and having a finite optimal value.
To verify feasibility, we use Farkas' lemma \cite{Williamson2008Lecture7}, which provides a certificate of infeasibility.
Applying the lemma to the system
\begin{equation*}
(A+\lambda D)\mathbf x \le \mathbf b,\qquad \mathbf x \ge \mathbf 0,    
\end{equation*}
we obtain that infeasibility occurs for some $\lambda$ if and only if there exist $\lambda$ and $\mathbf y$ such that
\begin{equation*}
(A+\lambda D)^\top \mathbf y \ge 0,\qquad \mathbf y^\top \mathbf b = -1,\qquad \mathbf y \ge \mathbf 0.    
\end{equation*}
The lower-level problem is therefore feasible for all $\lambda \in [\lambda_1,\lambda_2]$ if and only if the feasibility problem
\begin{equation}
\begin{aligned}
\max \quad & 0 \\
\text{s.t.} \quad
& (A+\lambda D)^\top \mathbf y \ge \mathbf 0, \\
& \mathbf y^\top \mathbf b = -1, \\
& \mathbf y \ge \mathbf 0, \\
& \lambda_1 \le \lambda \le \lambda_2
\end{aligned}
\label{feasprob}
\tag{$\mathcal{F}$}
\end{equation}
is infeasible.
Indeed, when~\eqref{feasprob} is infeasible, no certificate of infeasibility exists and the lower-level problem is feasible for every $\lambda \in [\lambda_1,\lambda_2]$.

Assume furthermore that the lower-level problem has a finite optimal value for every $\lambda$.
Strong duality then allows the lower level to be replaced by its dual representation, which yields the single-level formulation
\begin{equation}
\begin{aligned}
\max_{\lambda,\mathbf y}\quad
&\mathbf b^\top\mathbf y\\
\text{s.t.}\quad
&(A+\lambda D)^\top\mathbf y\le\mathbf c,\\
&\mathbf y\le0,\\
&\lambda_1\le\lambda\le\lambda_2.
\end{aligned}
\label{max_reformulation}
\tag{$\mathcal{P}_{\max}$}
\end{equation}
Under these two assumptions, namely that~\eqref{feasprob} is infeasible and that the lower-level problem is bounded for every $\lambda\in[\lambda_1,\lambda_2]$, the max--min problem is equivalent to~\eqref{max_reformulation}.

\section{Experiments}\label{sec:experiments}
This section reports a computational study of the three single-level models derived in
Section~\ref{sec:reformulations}.
These are the min--min formulation~\eqref{prob:p_min}, the Farkas feasibility
problem~\eqref{feasprob} and the max--min reformulation~\eqref{max_reformulation}.
We report the joint solver outcomes of the three models and their running times.
We compare these running times with the lower-level problem solved at a fixed value of $\lambda$.
We then show that parametric equality constraints make~\eqref{max_reformulation} hard in practice.
Finally, we study the effect of explicit bounds on the dual variables.

\subsection{Benchmark and setup}\label{sec:exp-benchmark}
We use the parametric linear instances proposed by Derval et al.~\cite{miftari2024sensitivity},
which are derived from the Netlib collection.
The constraints of the problems are of the forms
\begin{equation*}
A\mathbf x=\mathbf b,\qquad A\mathbf x\le \mathbf b,\qquad (A+\lambda D)\mathbf x=\mathbf b,\qquad (A+\lambda D)\mathbf x\le \mathbf b,
\end{equation*}
with 100 constraints depending on the parameter~$\lambda$.
These 100 constraints are split between equalities and inequalities, the number of each type
ranging from 0 to 100 across the collection.
For each instance, we consider the same interval for the parameter $\lambda$, namely $\Lambda=[-1,1]$.
Table~\ref{tab:instance-sizes} summarizes the size distribution.


\begin{table}[ht]
\centering
\begin{tabular}{lrrrr}
\hline
Statistics & Min & Median & Mean & Max \\
\hline
Vars. & 79 & 1\,175 & 2\,028 & 13\,525 \\
Total Constr. & 153 & 1\,966 & 3\,240 & 24\,025 \\
$Ax=b$ & 0 & 287 & 495 & 3,115 \\
$Ax\leq b$ & 14 & 1\,486 & 2\,644 & 21\,025 \\
$(A+\lambda D)x=b$ & 0 & 18 & 35 & 100 \\
$(A+\lambda D)x\leq b$ & 0 & 82 & 65 & 100 \\
\hline
\end{tabular}
\caption{Size statistics for the 191 benchmark instances.}
\label{tab:instance-sizes}
\end{table}

On eleven of the 191 instances, the min--min model~\eqref{prob:p_min} is infeasible, so the
lower-level problem admits no feasible point at any $\lambda\in\Lambda$ and the max--min value is
$+\infty$ by Remark~\ref{remark:bilevel_inf_unb}. All numbers below refer to the remaining 180
instances. On those, the nominal problem ($\lambda=0$) is solved to optimality in a median of $0.005$ seconds, a mean of $0.020$ seconds and a
maximum of $0.154$ seconds. A single linear program at a fixed value of the parameter is thus
computationally negligible and the running times we report below are the cost of
optimizing over the interval $\Lambda$.

All models are implemented in Python and solved with Gurobi Optimizer 12.0.3 on a machine running
Linux Mint 22.3 with an AMD Ryzen 7 5825U processor (8 physical cores, 16 logical processors) and
35~GiB of usable RAM. Gurobi is allowed to use all 16 threads, and each run is given a time limit
of 1\,200 seconds.

Bilinear terms are passed to the solver through auxiliary variables. We introduce
$z_i=\lambda x_i$ for the min--min model and $w_i=\lambda y_i$ for~\eqref{feasprob}
and~\eqref{max_reformulation}, add the defining bilinear equalities, and let Gurobi handle the
resulting non-convex quadratic constraints through its spatial branch-and-bound. No relaxation of
these products is performed on our side.

\subsection{Results}\label{sec:exp-outcomes}
For each instance we solve the min--min model~\eqref{prob:p_min}, then~\eqref{feasprob}, and then the max--min
model~\eqref{max_reformulation} unless~\eqref{feasprob} returned a certificate.
When~\eqref{feasprob} reaches the time limit, lower-level feasibility over $\Lambda$ is
undecided but we still solve~\eqref{max_reformulation}.

\begin{table}[ht]
\begin{tabular}{lllrr}
\hline
Minimum & Lower-level feasibility & Maximum & \#instances & \%\\
\hline
optimal & feasible for all $\lambda$ & optimal & 83 & 46.1\\
optimal & time limit & time limit & 42 & 23.3\\
optimal & feasible for all $\lambda$ & time limit & 34 & 18.9\\
optimal & time limit & unbounded & 12 & 6.7\\
optimal & feasible for all $\lambda$ & suboptimal & 3 & 1.7\\
optimal & infeasible for some $\lambda$ & skipped & 3 & 1.7\\
optimal & feasible for all $\lambda$ & unbounded & 2 & 1.1\\
suboptimal & feasible for all $\lambda$ & time limit & 1 & 0.6\\
\hline
\end{tabular}
\caption{Joint solver outcomes for the min--min formulation, the feasibility check~\eqref{feasprob}, and the max--min formulation over the 180 retained
instances.}
\label{tab:combinations}%
\end{table}

Table~\ref{tab:combinations} summarizes the joint outcomes.
The min--min problem reaches a proven finite optimum on every instance but
one, which terminates with an incumbent the solver could not certify within tolerances.
The feasibility check is decided on 70\% of the instances and reaches
the time limit on the rest.
The max--min reformulation is the bottleneck.
It reaches a proven finite optimum on only $46.1\%$ of them.

On three instances~\eqref{feasprob} is feasible and returns a Farkas certificate that the
lower-level problem is infeasible at some $\lambda\in\Lambda$, namely at $\lambda=-1$,
$\lambda=0.651$ and $\lambda=0.096$. The max--min value is $+\infty$ on these three,
and~\eqref{max_reformulation} is not solved.
Figure~\ref{fig:holes} shows their value function over $\Lambda$.
On each of them the lower level is infeasible on a whole subinterval rather than at an isolated
point, and is feasible at $\lambda=0$.

\begin{figure}[ht]
\centering
\includegraphics[width=\textwidth]{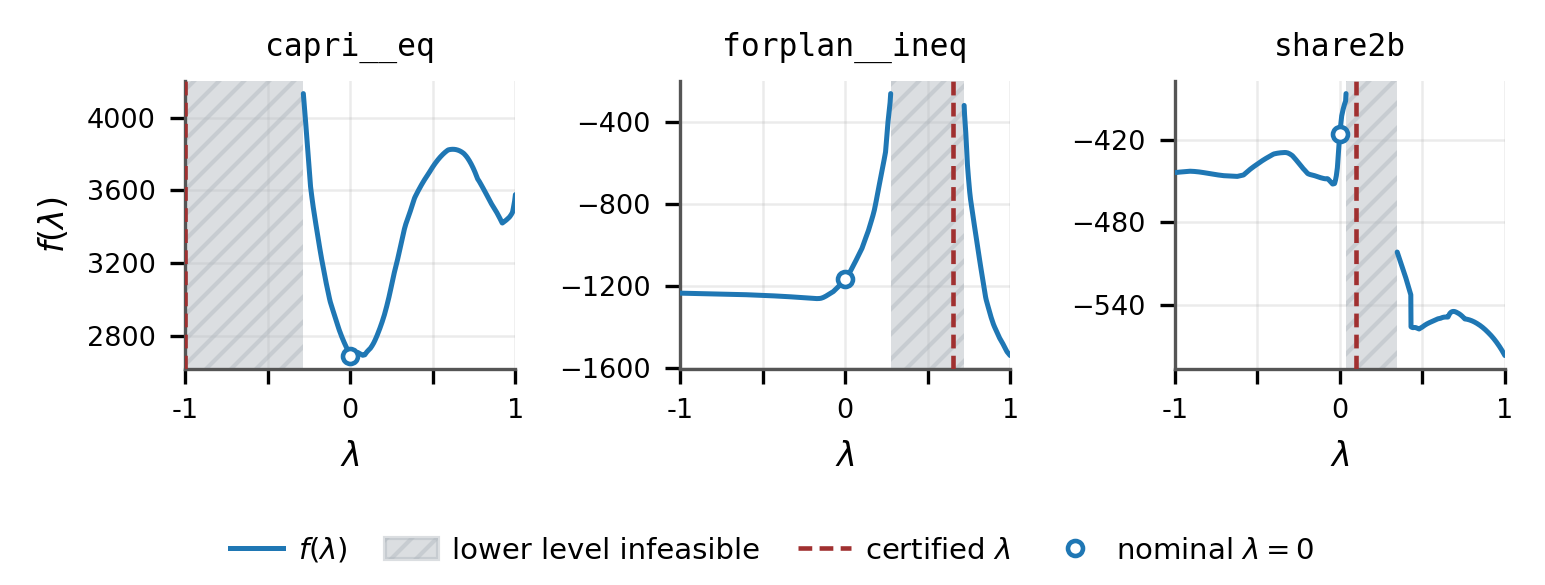}
\caption{Value function $f(\lambda)$ over $\Lambda=[-1,1]$ on the three instances for
which~\eqref{feasprob} returns a certificate. The hatched band is the set of $\lambda$ at which the
lower-level problem is infeasible, obtained by solving it at $1\,601$ equally spaced values of
$\lambda$. The dashed line is the value certified by~\eqref{feasprob}.}\label{fig:holes}
\end{figure}

On two instances, \eqref{feasprob} certifies that the lower-level problem is feasible for every $\lambda\in\Lambda$ while \eqref{max_reformulation} ends with an unbounded status.
Under our assumptions, the max--min value must be finite, making the unbounded status returned by Gurobi 12.0.3 for \eqref{max_reformulation} inconsistent with this certificate.
On both instances, Gurobi finds an incumbent whose objective value equals the maximum of $f$ over $\Lambda$, but does not certify its optimality.
Further investigation shows that the solver BARON also reports an unbounded status, whereas Gurobi 13 certifies a finite optimal solution on both instances.
Both instances contain parametric equality constraints, whose free dual variables enter bilinear terms in \eqref{max_reformulation}. These observations suggest difficulties in handling the resulting nonconvex formulation.

\begin{table}[ht]
\small
\begin{tabular}{llrrrr}
\hline
Model & Status & \#runs & Median & Mean & Max\\
\hline
Nominal problem ($\lambda=0$) & optimal & 180 & 0.005 & 0.020 & 0.154\\
\hline
min--min~\eqref{prob:p_min} & optimal & 179 & 0.23 & 2.11 & 53.0\\
 & suboptimal & 1 & 0.27 & 0.27 & 0.27\\
\hline
\eqref{feasprob} & feasible for all $\lambda$ & 123 & 0.06 & 10.7 & 667.0\\
 & certificate found & 3 & 0.25 & 0.84 & 2.20\\
 & time limit & 54 & 1200 & 1200 & 1200\\
\hline
max--min~\eqref{max_reformulation} & optimal & 83 & 0.33 & 3.97 & 203.0\\
 & suboptimal & 3 & 98.9 & 109.9 & 230.5\\
 & unbounded & 14 & 349.0 & 442.4 & 1185.6\\
 & time limit & 77 & 1200 & 1200 & 1200\\
 & skipped & 3 & -- & -- & --\\
\hline
\end{tabular}
\caption{Running times in seconds, by model and termination status, over the 180 retained instances.}
\label{tab:runtimes}%
\end{table}

Table~\ref{tab:runtimes} reports the running-time distribution of each model.
When the exact formulations terminate, they are fast.
The median solved run of~\eqref{max_reformulation} takes $0.33$ seconds, slightly more than the
min--min model.
A run therefore either terminates quickly or reaches the time limit, with little in between.
The optimality gaps show the same thing: of
the 77 runs that reach the time limit, 67 still have an infinite gap, the solver having
produced no finite dual bound at all. The feasibility check is not a cheap side computation
either, with 54 time limits and a worst terminating run of 667 seconds.

\subsection{Why the max--min reformulation is hard}\label{sec:exp-why}
Both single-level models contain bilinear terms $\lambda x_i$ or $\lambda y_i$, which Gurobi relaxes by McCormick envelopes~\cite{mccormick1976,gurobiminlp}.
In~\eqref{prob:p_min}, $x_i\ge0$ is bounded. In~\eqref{max_reformulation}, the dual variable $y_i$ is free when the
primal has equality constraints, so the relaxation can be unbounded and the solver might find no finite dual bound.
This is the case in the 67 runs that end with an infinite gap.
All except one of the 77 time limits of~\eqref{max_reformulation} occur on an instance with at least one parametric equality (Table~\ref{tab:eqsplit}).

The magnitude of the dual variables is a candidate explanation for the practical difference
between the two models. We explore it by re-solving the 77 time-limit instances after adding a
box $-M\le y_i\le M$ on every dual variable. We take $M=10^{4}$ and a reduced time limit of 120
seconds. On the 35 instances on which~\eqref{feasprob} proved the lower-level problem feasible
for all $\lambda$, the box is decisive. Every one of them reaches a proven optimum, in a median
of $0.73$ seconds, where 1\,200 seconds had left most of them without even a finite dual bound.

\begin{table}[ht]
\small
\begin{tabular}{lrrrrrr}
\hline
& \multicolumn{2}{c}{min--min~\eqref{prob:p_min}} & \multicolumn{2}{c}{\eqref{feasprob}} & \multicolumn{2}{c}{max--min~\eqref{max_reformulation}}\\
\cmidrule(lr){2-3}\cmidrule(lr){4-5}\cmidrule(lr){6-7}
Parametric equalities & solved & time limit & solved & time limit & solved & time limit\\
\midrule
none (63) & 63 & 0 & 63 & 0 & 58 & 1\\
at least one (117) & 116 & 0 & 63 & 54 & 25 & 76\\
\hline
\end{tabular}
\caption{Solver outcomes split by whether the instance has at least one parametric equality constraint $(A+\lambda D)x=b$,
over the 180 retained instances. Only two of the possible outcomes are reported.}
\label{tab:eqsplit}
\end{table}

\begin{table}[ht]
\small
\begin{tabular}{lrrrr}
\hline
& \multicolumn{2}{c}{feasible for all $\lambda$ (35)} & \multicolumn{2}{c}{time limit (42)}\\
\cmidrule(lr){2-3}\cmidrule(lr){4-5}
& no bounds & $|y_i|\le10^4$ & no bounds & $|y_i|\le10^4$\\
& 1\,200\,s & 120\,s & 1\,200\,s & 120\,s\\
\hline
Proven optimal & 0 & 35 & 0 & 34\\
Finite dual bound obtained & 10 & 35 & 0 & 42\\
Infinite gap at termination & 25 & 0 & 42 & 0\\
Median running time (s) & 1200 & 0.73 & 1200 & 18.0\\
Box active at the returned solution & --- & 24/35 & --- & 36/42\\
\hline
\end{tabular}
\caption{Effect of an explicit box $|y_i|\le10^{4}$ on the dual variables, on the 77 instances
for which~\eqref{max_reformulation} reaches the 1\,200-second time limit without bounds. The
boxed model is given only 120 seconds. The two groups differ in whether~\eqref{feasprob} proved
the lower-level problem feasible for all $\lambda$ within its own time
limit.}
\label{tab:dualbounds}
\end{table}

The box, however, modifies the problem being solved.
By linear programming duality, bounding $|y_i|\le M$ on an equality row adds a slack variable to
that row in the lower-level problem. The slack is priced at $M$ per unit of violation.
The boxed model therefore solves a relaxation in which every equality may be violated at a large
but finite cost. A dual variable sitting at its bound means that this slack is used. That
equality is then expensive to satisfy exactly. The boxed value is only a lower bound on the
max--min value. This happens on 24 of these 35 instances. On the 11 others the box is inactive at
both $M=10^{4}$ and $M=10^{5}$. The returned value is then the max--min value itself.

\section{Conclusion}\label{sec:conclusions}
In this paper, we consider parametric linear programs with constraint matrices depending linearly on a scalar parameter.
We study the problem of computing the minimum and maximum of the associated value function over a bounded interval.
We prove that both corresponding decision versions of the bilevel problems are NP-complete, and we identify restricted assumptions under which the bilevel programs are no longer NP-hard.



The min--min problem reduces directly to a single-level bilinear program, whereas the max--min problem requires a strong-duality reformulation that is valid only when the lower level is feasible over the whole parameter interval.
Rather than assuming this condition, as is common in the bilevel literature, we certify it through a Farkas-based feasibility problem.

The computational study on 180 instances derived from Netlib reveals a clear practical asymmetry between the two problems.
The min--min formulation is solved on all but one instance, with a median running time below one second.
In contrast, the max--min reformulation reaches a proven optimum on less than half of the instances, and the feasibility check itself reaches the time limit on 54 of them.
The difficult instances are almost exclusively those carrying a parametric equality constraint.
Their dual variables are free, the McCormick relaxation of the corresponding bilinear terms is then unbounded, and the solver terminates without any finite dual bound.
Imposing bounds on these variables restores a finite relaxation and lets the solver terminate, but the bounded model is no longer exact.

Several questions remain open.
Bounds on the dual variables that are valid rather than imposed would keep the reformulation exact and make its relaxation informative.
A second direction is algorithmic. The problem depends on a single scalar parameter, and a dedicated method could branch on this parameter alone.


\newpage

\section*{Statements and Declarations}

\begin{itemize}
\item \textbf{Competing Interests.} The authors declare that they have no conflict of interest.
\item \textbf{Data Availability.} The benchmark instances used in this study come from Derval et al.~\cite{miftari2024sensitivity} and are publicly available on Zenodo at \url{https://zenodo.org/records/18804576}.
\item \textbf{Funding.} No funding was received for conducting this study.
\end{itemize}

\begin{appendices}

\section{Auxiliary results used in the proof of Theorem~\ref{npinclusion}}\label{appendix:np}

\begin{lemma}
\label{lem:parametric-det}
Let $M(\lambda)$ be an $m\times m$ matrix whose entries are affine functions
of $\lambda$ with rational coefficients. Then, the following holds:
\begin{enumerate}[label=(\roman*)]
\item\label{lemma:assertion1} $\det(M(\lambda))$ is a polynomial of degree at most $m$;
\item\label{lemma:assertion3} the coefficients of $\det(M(\lambda))$ can be computed in polynomial time in the encoding length of $M$;
\item\label{lemma:assertion2} the coefficients of $\det(M(\lambda))$ have encoding length polynomial in the encoding length of $M$.
\end{enumerate}
\end{lemma}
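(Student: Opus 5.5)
We write $M(\lambda) = M_0 + \lambda M_1$, where $M_0, M_1 \in \mathbb{Q}^{m\times m}$ collect the constant and linear parts of the entries. The encoding length of $M$ is understood as that of the pair $(M_0,M_1)$.

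For assertion~\ref{lemma:assertion1}, we expand by the Leibniz formula:
\[
\det(M(\lambda)) = \sum_{\sigma\in S_m} \operatorname{sgn}(\sigma)\prod_{i=1}^m \bigl((M_0)_{i\sigma(i)} + \lambda (M_1)_{i\sigma(i)}\bigr).
\]
Each product has $m$ affine factors, so it is a polynomial of degree at most $m$, and hence so is the sum.

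For assertions~\ref{lemma:assertion3} and~\ref{lemma:assertion2}, the Leibniz expansion has $m!$ terms, so it cannot be used algorithmically. Instead we use evaluation and interpolation. Write $\det(M(\lambda)) = \sum_{k=0}^m a_k\lambda^k$ and pick the $m+1$ integer nodes $\lambda = 0,1,\dots,m$.

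At each node $M(k)$ is a rational matrix whose encoding length is polynomial in that of $M$; the node contributes only $O(\log m)$ bits, multiplied entrywise. Its determinant can be computed in polynomial time by fraction-free Gaussian elimination (Bareiss), or by Edmonds' result that Gaussian elimination on rational matrices keeps all intermediate numbers polynomially bounded. By Hadamard's inequality, together with the bound that the encoding length of the determinant of a rational matrix is at most twice its input length (Schrijver, Theorem~3.2), each value $\det(M(k))$ has polynomial encoding length.

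We then recover $(a_0,\dots,a_m)$ by solving the Vandermonde system $V\mathbf a = (\det M(0),\dots,\det M(m))^\top$ with $V_{kj}=k^j$. This is an $(m+1)\times(m+1)$ rational linear system whose entries have $O(m\log m)$ bits. Gaussian elimination solves it in polynomial time, and by Cramer's rule and the same determinant-size bound the solution has polynomial encoding length. This proves both remaining assertions together.

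A cleaner alternative bound for~\ref{lemma:assertion2} first clears denominators: multiply $M(\lambda)$ by the product $L$ of all denominators of $M_0,M_1$, which has polynomial bit length. Then $\det(LM(\lambda)) = L^m\det M(\lambda)$ has integer coefficients. Each coefficient $a_k$ is a sum over at most $\binom{m}{k}$ choices of columns drawn from $M_1$ rather than $M_0$ of determinants of integer matrices, so it is bounded by $2^m\, m!\,\max|{\rm entry}|^m$, which has polynomially many bits.

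The main obstacle is the size control in step (iii): making sure neither the evaluations nor the interpolation blow up. The plan is to rely on the standard polynomial-size guarantees for rational determinants and linear systems (Edmonds/Schrijver) rather than redoing the analysis.
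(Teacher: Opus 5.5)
Your proof is correct and follows the same route as the paper: the Leibniz expansion gives the degree bound, and evaluating $\det(M(\lambda))$ at the $m+1$ integer nodes $0,1,\dots,m$ and then interpolating gives computability and size. Your version is more careful than the paper's, which claims $M(\lambda')$ has integer entries and cites ``standard interpolation'' without size control; you handle rational entries correctly and bound the bit-size of both the evaluations and the Vandermonde solve via Edmonds/Schrijver and Cramer's rule.
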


\begin{proof}

By the Leibniz formula, we have
\begin{equation*}
\det(M(\lambda))
=
\sum_{\sigma\in S_m}
\operatorname{sgn}(\sigma)
\prod_{i=1}^m
M_{i,\sigma(i)}(\lambda),
\end{equation*}
where $S_m$ is the permutation group. Each factor $M_{i,\sigma(i)}(\lambda)$ is affine in $\lambda$, hence every product has degree at most
$m$, proving assertion~\ref{lemma:assertion1}.

Next, we prove assertions~\ref{lemma:assertion3} and~\ref{lemma:assertion2}. By assertion~\ref{lemma:assertion1}, the polynomial $\det(M(\lambda))$ has degree at most $m$. Hence, it is uniquely determined by its values at any $m+1$ distinct points. For each integer $\lambda^\prime \in\{0,1,\ldots,m\}$, the matrix $M(\lambda^\prime)$ can be constructed in polynomial time and has integer entries of polynomial encoding length. Therefore, each determinant $\det(M(\lambda^\prime))$ can be computed in polynomial time using any polynomial-time algorithm for the determinant of an integer matrix.

Having computed the values $\det(M(0)), \det(M(1)),\ldots, \det(M(m))$, the coefficients of the unique polynomial $\det(M(\lambda))$ of degree at most $m$ are recovered by polynomial interpolation on these $m+1$ evaluation points, which can be performed in polynomial time using standard polynomial interpolation algorithms. Hence, these coefficients have encoding length polynomial in the encoding length of M.
\end{proof}

\begin{lemma}
\label{lem:rational-to-integer}
Let
\begin{equation*}
f_1(\lambda),\ldots,f_s(\lambda)
\in
\mathbb{Q}[\lambda]
\end{equation*}
be univariate polynomials.
Then there exist other univariate polynomials
\begin{equation*}
g_1(\lambda),\ldots,g_s(\lambda)
\in
\mathbb{Z}[\lambda] 
\end{equation*}
that can be computed in polynomial time such that, for every $i=1, \ldots, s$,
\begin{equation*}
f_i(\lambda)\;\Delta_i\;0 \quad\Longleftrightarrow\quad g_i(\lambda)\;\Delta_i\;0,
\qquad
\Delta_i\in
\{<,\le,=,\ge,>\},
\end{equation*}
\end{lemma}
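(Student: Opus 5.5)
The plan is to clear denominators by multiplying each $f_i$ by a positive integer. For each $i$, write $f_i(\lambda)=\sum_{k=0}^{d_i} \frac{a_{ik}}{b_{ik}}\lambda^k$ with $a_{ik}\in\mathbb{Z}$, $b_{ik}\in\mathbb{Z}_{>0}$, each fraction in lowest terms as given in the input encoding. Define $L_i=\operatorname{lcm}(b_{i0},\ldots,b_{id_i})$, or, to avoid any lcm computation, simply the product $L_i=\prod_{k} b_{ik}$. Then set $g_i(\lambda)=L_i f_i(\lambda)=\sum_k \frac{L_i a_{ik}}{b_{ik}}\lambda^k$. Every coefficient $L_i a_{ik}/b_{ik}$ is an integer because $b_{ik}\mid L_i$, so $g_i\in\mathbb{Z}[\lambda]$.

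Equivalence of the sign conditions follows because $L_i>0$ is a constant. For every real $\lambda$ the values $g_i(\lambda)$ and $f_i(\lambda)$ have the same sign, and either both vanish or neither does. Hence $f_i(\lambda)\,\Delta_i\,0 \iff g_i(\lambda)\,\Delta_i\,0$ holds for each $\Delta_i\in\{<,\le,=,\ge,>\}$. The transformation is applied row by row, so the equivalence holds simultaneously for the whole system. It therefore preserves the feasible set in $\lambda$, in particular over $[\lambda_1,\lambda_2]$.

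The only step needing care is the polynomial running time, which I expect to be the main (though mild) obstacle. Using the product $L_i=\prod_k b_{ik}$, the encoding length of $L_i$ is at most $\sum_k \langle b_{ik}\rangle$, which is bounded by the encoding length of $f_i$. Each new coefficient $L_i a_{ik}/b_{ik}=a_{ik}\prod_{l\ne k} b_{il}$ is a product of at most $d_i+1$ input integers. Its bit length is therefore at most the encoding length of $f_i$, and it can be computed by at most $d_i$ integer multiplications of polynomial-size numbers. Summing over $k$ and $i$ gives total time polynomial in the encoding length of $(f_1,\ldots,f_s)$. The output size is at most quadratic in the input size, since each $g_i$ has $d_i+1$ coefficients, each of length at most $\langle f_i\rangle$.

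If reduced fractions are not assumed, the same argument works for any representation with positive denominators. A denominator given as a negative integer is first normalized by moving its sign into the numerator, which preserves the sign equivalence above.
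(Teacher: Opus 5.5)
Your proposal is correct and follows essentially the same route as the paper: you multiply each $f_i$ by the positive integer $\prod_k b_{ik}$, the product of its coefficient denominators, so $g_i(\lambda)$ has the same sign as $f_i(\lambda)$ for every real $\lambda$ and the encoding length stays polynomial. Your bit-length accounting for the individual coefficients $a_{ik}\prod_{l\neq k} b_{il}$ is somewhat more explicit than the paper's, which only bounds the length of the multiplier itself.
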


\begin{proof}
 Fix a polynomial
\begin{equation*}
    f(\lambda)=\sum_{k=0}^{d}\frac{a_k}{b_k}\lambda^k 
\end{equation*}
 where $a_k\in\mathbb{Z}$ and $b_k\in\mathbb{Z}^+_0$.
 Let 
 \begin{equation*}
    D=\prod_{k=0}^{d} b_k     
 \end{equation*}
 and define
 \begin{equation*}
     g(\lambda)=D\,f(\lambda)\in\mathbb{Z}[\lambda].
 \end{equation*}
 Since $D>0$, multiplication by $D$ preserves all comparison relations with zero.
 Moreover, the binary encoding length of $D$ is the sum of the encoding lengths of the denominators $b_k$, and therefore is polynomial in the encoding length of $f$. Consequently, the coefficients of $g$ also have polynomial encoding length.
\end{proof}

\begin{lemma}[Renegar {\cite[Theorem~1.1]{renegar1992}}]
\label{lem:univariate}

Let
\begin{equation*}
f_1(\lambda),\ldots,f_s(\lambda)
\in
\mathbb{Z}[\lambda]
\end{equation*}
be univariate polynomials of degree at most $d$, whose coefficients have
binary encoding length at most $L$. Then, the feasibility of a system of polynomial constraints
\begin{equation*}
f_i(\lambda)\;\Delta_i\;0,
\qquad
\Delta_i\in
\{<,\le,=,\ge,>\},
\end{equation*}

can be decided in time polynomial in
$s$, $d$ and $L$.
\end{lemma}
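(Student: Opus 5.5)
This result is Renegar's theorem, so the paper uses it as a citation. Still, I would prove the univariate special case directly through root isolation. First I would clean up the system. Every polynomial that is identically zero either satisfies its constraint for all $\lambda$ or for no $\lambda$; I drop it or reject the instance immediately, depending on $\Delta_i$. Next I compute $P=\prod_i f_i$, or better the squarefree part of this product via gcd computations with subresultants. This gives the set $R$ of all real roots of all nonzero $f_i$, with $|R|\le sd$. Both the number of polynomials and the coefficient sizes stay polynomial in $s$, $d$ and $L$. The key structural fact is then immediate: between consecutive points of $R$, every $f_i$ has constant nonzero sign. So feasibility can be decided by testing each root in $R$ and one sample point in each of the at most $sd+1$ open intervals they determine.

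Second, I would isolate the real roots with exact rational arithmetic. I would use Sturm sequences, or Descartes' rule with bisection, on each squarefree $f_i$. By Cauchy's bound, all roots lie in $[-2^{L+1},2^{L+1}]$. The separation bound of Mahler and Mignotte guarantees that distinct roots of the squarefree product are at least $2^{-\mathrm{poly}(sd,L)}$ apart. Hence bisection reaches disjoint isolating intervals with rational endpoints after polynomially many halvings. Each Sturm evaluation is polynomial-time, provided I control coefficient growth, for example by using subresultant sequences. For the interval samples I would pick rational endpoints of the isolating intervals, which lie strictly between consecutive roots. They also lie outside all roots once the intervals are refined to be pairwise disjoint across all the $f_i$. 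Evaluating every $f_i$ at these rationals is then exact and polynomial.

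Third, at a root $\alpha\in R$ I need the sign of each $f_j$ at $\alpha$. I would compute it by first testing whether $\alpha$ is a common root, using $\gcd(f_i,f_j)$ together with its isolating interval. If it is not, I refine the interval until the interval contains no root of $f_j$, and then evaluate $f_j$ at an endpoint. The system is feasible exactly when some sample point, or some root, satisfies all constraints.

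I expect the main obstacle to be bit-complexity control. This means keeping the coefficients of the intermediate remainder sequences and gcds of polynomial size, and bounding the refinement depth through the root separation bound. I would handle it with subresultant theory and the Mignotte bound, rather than naive Euclidean remainders. Alternatively, at that point I would simply invoke Renegar's general result for the first-order theory of the reals with a fixed number of variables (here one), as the statement does.
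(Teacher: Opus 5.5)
Your proof is correct, but it takes a different route from the paper. The paper gives no argument of its own: it specializes Renegar's decision procedure for the existential theory of the reals (Theorem~1.1 of \cite{renegar1992}) to a single variable. In that case the complexity is polynomial in the number of polynomials, their degree, and $L$.

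You instead carry out the one-dimensional cell decomposition directly. The at most $sd$ real roots of the nonzero $f_i$ split $\mathbb{R}$ into at most $2sd+1$ cells on which every $f_i$ has constant sign. You isolate these roots exactly, using Cauchy's bound for the search range, the Mahler--Mignotte separation bound for the bisection depth, and subresultants to control coefficient growth. You then decide signs at rational interval endpoints, and at the roots themselves through root-counting or gcd tests.

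Your route gives a self-contained, elementary algorithm with explicit bit-complexity bookkeeping, essentially classical exact real root isolation followed by sign determination. It avoids the heavy machinery of the first-order theory of the reals. The citation gives brevity, and it would extend unchanged if the setting ever had several parameters instead of the scalar $\lambda$.

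One detail in your second step needs tightening. Isolate the roots of the squarefree part of $\prod_i f_i$ once, rather than the roots of each $f_i$ separately. The isolating intervals of a common root of $f_i$ and $f_j$ can never be refined to be disjoint. With a single isolation of that product, each isolating interval (with non-root endpoints) contains exactly one point $\alpha$ of $R$. Your third step then needs no further refinement: $f_j(\alpha)=0$ if and only if $f_j$ has a root in that interval, which a Sturm count detects. Otherwise the sign of $f_j$ at $\alpha$ equals its sign at either endpoint.
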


\begin{proof}
The result is an immediate specialization of Theorem~1.1 of Renegar~\cite{renegar1992}
to the case of a single real variable.
\end{proof}

\end{appendices}
\bibliographystyle{plain}
\bibliography{ref}
\end{document}